\newtheorem{theorem}{Theorem}
\numberwithin{theorem}{section}
\numberwithin{remark}{section}
\newtheorem{lemma}{Lemma}
\numberwithin{lemma}{section}
\newtheorem{corollary}{Corollary}
\numberwithin{corollary}{section}
\numberwithin{proposition}{section}
\newtheorem{conjecture}{Conjecture}
\numberwithin{conjecture}{section}
\numberwithin{equation}{section}
\newcommand{\inv}{\operatorname{inv}}
\newcommand{\maj}{\operatorname{maj}}
\newcommand{\area}{\operatorname{area}}
\newcommand{\dinv}{\operatorname{dinv}}
\newcommand{\ides}{\operatorname{ides}}
\newcommand{\comp}{\operatorname{comp}}
\newcommand{\touch}{\operatorname{touch}}
\newcommand{\diagword}{\operatorname{diagword}}
\newcommand{\Yconsec}{\operatorname{Yconsec}}
\title{A proof of the Square Paths Conjecture}
\author{Emily Sergel Leven}
\begin{document}

\maketitle

\begin{abstract}
The modified Macdonald polynomials, introduced by Garsia and Haiman (1996), have many astounding combinatorial properties. One such class of properties involves applying the related $\nabla$ operator of Bergeron and Garsia (1999) to basic symmetric functions. The first discovery of this type was the (recently proven) Shuffle Conjecture of Haglund, Haiman, Loehr, Remmel, and Ulyanov (2005), which relates the expression $\nabla e_n$ to parking functions. In (2007), Loehr and Warrington conjectured a similar expression for $\nabla p_n$ which is known as the Square Paths Conjecture.

Haglund and Loehr (2005) introduced the notion of schedules to enumerate parking functions with a fixed set of cars in each diagonal. In this paper, we extend the notion of schedules and some related results of Hicks (2013) to labeled square paths. We then apply our new results to prove the Square Paths Conjecture.
\end{abstract}


\section{Introduction}

This paper addresses the interplay between symmetric function theory and combinatorics. In particular, we prove that $\nabla p_n$ can be expressed as a weighted sum of certain labeled lattice paths (called preference functions or labeled square paths). This formula for $\nabla p_n$ was originally conjectured by Loehr and Warrington \citep{sqconj}. Here $p_n$ is the $n$th power symmetric function and $\nabla$ is the symmetric function operator introduced by Bergeron and Garsia \citep{SciFi}. This linear operator is defined by its action on the modified Macdonald polynomials ($\nabla$'s eigenfunctions). The Macdonald polynomials are a basis for the ring of symmetric functions first introduced by Macdonald \citep{macoriginal} and later modified by Garsia and Haiman \citep{modmac}.

The $\nabla$ operator is also a component of the Shuffle Conjecture. The symmetric function side of the Shuffle Conjecture - $\nabla$ applied to the elementary symmetric functions $e_n$ - was first studied because of its relation to the module of Diagonal Harmonics. In \citep{shuffleconj}, Haglund, Haiman, Loehr, Remmel, and Ulyanov conjectured a combinatorial formula for $\nabla e_n$ as an enumeration of certain labeled Dyck paths, called parking functions. This conjecture was refined by Haglund, Morse, and Zabrocki \citep{compconj} and their refinement was recently proved by Carlsson and Mellit \citep{compproof}.

These two classes of labeled lattice paths - parking functions and preference functions - are intimately related. Both were introduced by Konheim and Weiss \citep{KW} in 1966. A preference function is a map $f:[n] \to [n]$. For convenience, we will also write it as the vector $(f(1),f(2),\dots,f(n))$. A parking function is any preference function such that $|f^{-1}([k])| \geq k$ for all $1 \leq k \leq n$. Konheim and Weiss motivated this definition by describing a parking procedure in which $n$ cars try to park in $n$ spaces on a one-way street according to a preference function $f$. The cars will all succeed in parking if and only if the preference function is a parking function.

For our purposes, it is more helpful to think of the lattice-path interpretation of preference/parking functions. Start with an empty $n \times n$ lattice. Write each car which prefers spot 1 (each $i \in f^{-1}(1)$) in column 1, starting at the bottom, from smallest to largest. Then move to the lowest empty row and write all the cars which prefer spot 2 ($f^{-1}(2)$) in column 2 from smallest to largest and bottom to top. Continue this procedure until all the cars have been recorded. Then draw in the unique smallest lattice path which consists of North and East steps and stays above each car. For example, see Figure \ref{fig:pref}.

This gives a bijective correspondence between the $n^n$ preference functions and the set of North-East paths from $(0,0)$ to $(n,n)$ which (1) have column-increasing labels adjacent to North steps and (2) end with an East step. The underlying lattice paths here are also known as square paths and the labels are known as cars. Furthermore, such a labeled path corresponds to a parking function if and only if the underlying path stays (weakly) above the line $y=x$. The underlying paths here are known as Dyck paths.

\begin{figure}[H]
\begin{center}
\includegraphics[width=2.3in]{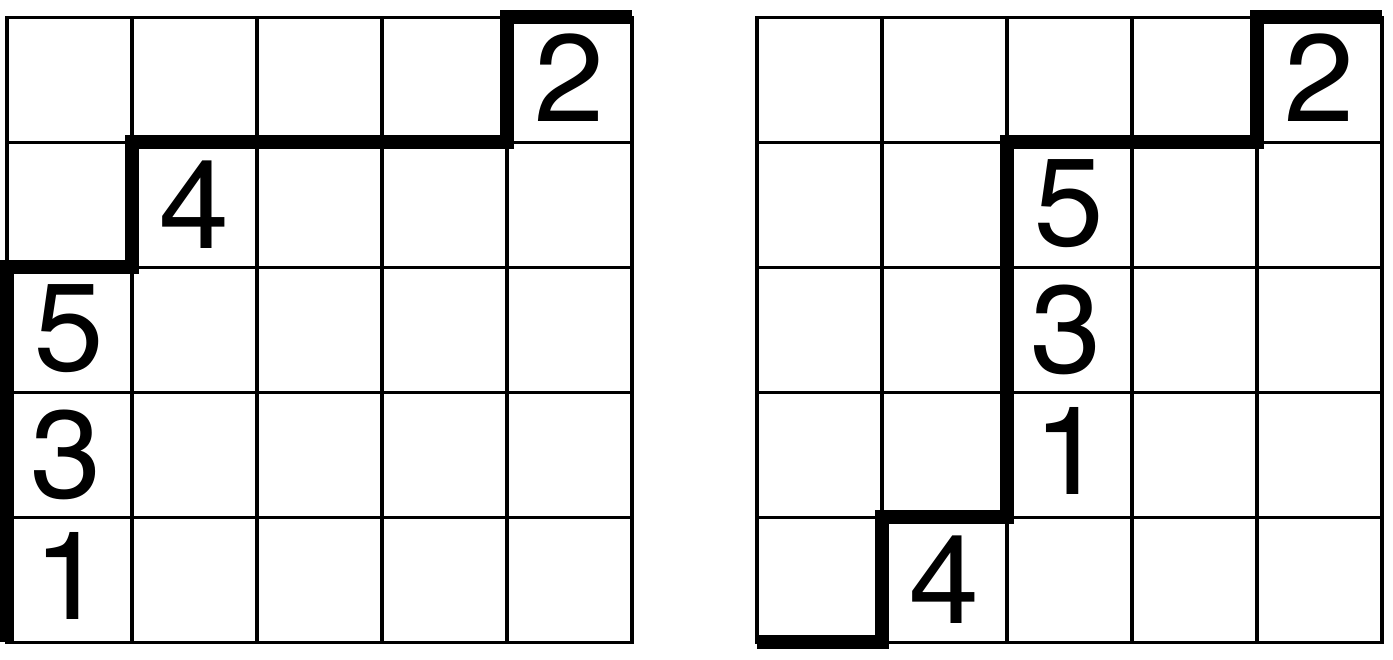}
\caption{The labeled paths corresponding to $(1,5,1,2,1)$ and $(3,5,3,2,3)$.}
\label{fig:pref}
\end{center}
\end{figure}
\vspace{-.5cm}

Since parking functions stay above the main diagonal $y=x$, a natural statistic is given by counting the number of full cells between the main diagonal and the underlying path. This statistic is known as the \emph{area}. The parking function in Figure \ref{fig:pref} (the path on the left) has $\area = 5$. The other statistics used in the Shuffle Conjecture are less natural. They make use of the diagonals of the parking function - those cells cut by a single line of the form $y=x+k$. One is the \emph{word} of a parking function $PF$, which is denoted $\sigma(PF)$. This is the permutation obtained by reading cars from highest to lowest diagonal and from right to left within each diagonal. The word of the parking function in Figure $\ref{fig:pref}$ is $4 \, 5 \, 3 \, 2 \, 1$. We will also make use of the \emph{descent set of the inverse} of $\sigma$. This is the set of $i$ for which $i+1$ comes before $i$ in $\sigma$. For simplicity, we will denote this by $\ides(PF)$. In Figure $\ref{fig:pref}$, the parking function has $\ides = \{ 1, 2, 3 \}$.

The final statistic is $\dinv$, whose name is an abbreviation of \emph{diagonal inversions}. There are two types of $\dinv$ for parking functions. A primary $\dinv$ occurs whenever two cars appear in the same diagonal and the car further left is smaller. In Figure \ref{fig:pref}, cars $1$ and $2$ form the only primary $\dinv$. A secondary $\dinv$ occurs whenever two cars appear in adjacent diagonals and the smaller car is both lower and further right. In Figure \ref{fig:pref}, cars $2$ and $3$ form the only secondary $\dinv$. The $\dinv$ of a parking function is the total number of primary and secondary $\dinv$s. Hence in our example, $\dinv=2$. 

Let ${\cal PF}_n$ be the set of all parking functions on $n$ cars. The original Shuffle Conjecture \citep{shuffleconj} states
$$
\nabla e_n = \sum_{PF \in {\cal PF}_n} t^{\area(PF)} q^{\dinv(PF)} Q_{\ides(PF)}.
$$
Here, for any $S \subseteq \{1,2,\dots,n-1\}$,
$$
Q_S(x_1,\dots,x_n) = \sum_{ \substack{ a_1 \leq \dots \leq a_n \\ i \in S \Rightarrow a_i < a_{i+1} } } x_{a_1} \cdots x_{a_n}
$$
is the fundamental quasi-symmetric function introduced by Gessel \citep{Gessel}. In Section \ref{sec:ides}, we will apply Haglund, Morse, and Zabrocki's refinement \citep{compconj} of the Shuffle Conjecture, which was recently proved by Carlsson and Mellit \citep{compproof}.

In \citep{sqconj}, Loehr and Warrington conjectured a similar formula for $\nabla p_n$. They express this symmetric function as an enumeration of all preference functions. Their statistics are similar to those used in the Shuffle Conjecture. The word of a preference function, for example, is calculated just as the word of a parking function is: the cars are read from highest to lowest diagonal and from right to left within each diagonal. We will again write $\ides(Pr)$ for the inverse descent set of the word of a preference function $Pr$. The preference function on the right of Figure \ref{fig:pref} has word $5 \, 2 \, 3 \, 1 \, 4$ and $\ides = \{1,4\}.$

The $\dinv$ of a preference function has three components: the usual primary and secondary $\dinv$s (within any diagonals) and a new component that we will call tertiary $\dinv$. The tertiary $\dinv$ is simply the number of cars strictly below the main diagonal $y=x$. For example, the preference function on the right of Figure \ref{fig:pref} has $\dinv = 3$. That is, it has no primary $\dinv$, one secondary $\dinv$ (between cars $2$ and $5$), and two tertiary $\dinv$s (contributed by cars $1$ and $4$).

To define the $\area$ of a preference function, we need to name diagonals. In particular, we will refer to the diagonal $y=x+k$ as the $k$-th diagonal. For any preference function $Pr$, let $l(Pr)$ be as large as possible so that the diagonal $y=x-l(Pr)$ is not empty. This is known as the deviation of the preference function. Note that $Pr$ is a parking function iff $l(Pr)=0$. Then $\area(Pr)$ is the sum over all cars of $Pr$ to which a car in diagonal $k$ contributes $k+l(Pr)$. So in the left side of Figure \ref{fig:pref}, the deviation is $1$ and $\area = 4$. 

It is easy to see that the two definitions given for $\dinv$ and word coincide when we view parking functions as (special) preference functions. To see the equivalence of the two definitions for the $\area$ of a parking function, note that a car in diagonal $k$ lies in a row with $k$ full cells between the underlying path and the main diagonal.

Let ${\cal P}ref_n$ be the set of all preference functions on $n$ cars.
\begin{conjecture}[Loehr-Warrington] \label{conj:sq}
$$
(-1)^{n-1} \nabla p_n = \sum_{Pr \in {\cal P}ref_n} t^{\area(Pr)} q^{\dinv(Pr)} F_{ides(\sigma(Pr))}
$$
\end{conjecture}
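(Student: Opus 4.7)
My plan is to reduce Conjecture~\ref{conj:sq} to the Carlsson--Mellit theorem (the compositional refinement of Haglund, Morse, and Zabrocki), which enumerates $\nabla C_\alpha \cdot 1$ as a sum over parking functions with touch composition equal to $\alpha$. The first step is to establish a symmetric-function identity expressing $(-1)^{n-1} p_n$ as a signed linear combination of products $C_\alpha \cdot 1$ for compositions $\alpha \models n$. Applying $\nabla$ to both sides and invoking Carlsson--Mellit then rewrites $(-1)^{n-1}\nabla p_n$ as a signed, weighted sum over parking functions decorated with extra compositional data recording how they decompose into touch blocks.

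The second step is to extend the Haglund--Loehr notion of \emph{schedules}, together with the compositional refinement of Hicks, from parking functions to labeled square paths. A schedule of a parking function records an arrangement of cars by diagonal, and the key property is that the sum of $q^{\dinv}$ over parking functions compatible with a fixed diagonal word factors as an explicit product of $q$-integers. For labeled square paths I would define schedules that also track cars in diagonals strictly below $y=x$; I expect an analogous product formula, with the tertiary $\dinv$ absorbed as additional schedule factors and the deviation $l(Pr)$ recovered from the lowest occupied diagonal.

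With the schedule machinery in place, the third step is to reorganize the signed parking function sum produced in step one and show, by grouping parking functions with the same schedule, that the signed contributions cancel except on configurations corresponding to genuine labeled square paths. The surviving terms should carry precisely the weight $t^{\area(Pr)}q^{\dinv(Pr)}F_{\ides(\sigma(Pr))}$ demanded by the conjecture, essentially an analogue of Hicks's approach but producing square paths of arbitrary deviation rather than parking functions with prescribed touch structure.

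I expect the main obstacle to be the third step: the quasisymmetric function $F_{\ides(\sigma(Pr))}$ depends on the full reading word of $Pr$, so any cancellation must be carried out in a way that either preserves the word or groups terms with compatible Gessel quasisymmetric functions. Recovering the correct deviation and tertiary $\dinv$ via a re-indexing of diagonals adds further bookkeeping. Identifying the clean symmetric-function identity required in step one is also a nontrivial preliminary, since $p_n$ does not expand naturally in the $C$-operator basis; the right identity may have to be discovered by computing both sides for small $n$ and pattern-matching.
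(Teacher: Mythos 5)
Your overall architecture (Carlsson--Mellit plus a symmetric-function identity expanding $(-1)^{n-1}p_n$ in data refined by touch points, plus schedules extended to square paths) matches the paper's, and your step two is essentially Theorem~\ref{newferm}. But your central mechanism --- a \emph{signed} expansion of $p_n$ in the $C_\alpha$ basis followed by a cancellation argument among decorated parking functions --- is the wrong engine, and it is where the proposal breaks down. The identity that actually exists (Theorem~\ref{Esymmid}, due to Garsia--Haglund and proved by Can--Loehr) is \emph{sign-free}: $(-1)^{n-1}p_n = \sum_{k=1}^n \frac{[n]_q}{[k]_q}E_{n,k}$, with coefficients that are ratios of $q$-integers (not even polynomials unless $k \mid n$), so no cancellation is available or needed. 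The combinatorial counterpart of the coefficient $[n]_q/[k]_q$ is not an involution killing signed terms; it is the statement that preference functions with a fixed diagonal word $\tau$ are enumerated by exactly $[n]_q/[k]_q$ times the corresponding parking-function enumeration (Corollary~\ref{noides}). Your plan to ``group parking functions with the same schedule and cancel'' has no analogue here, and since the preference-function side is manifestly positive term-by-term, any signed reorganization would force you to build a sign-reversing involution compatible with the full reading word --- precisely the obstacle you flag without resolving.

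What your proposal is missing are the two lemmas that make the positive argument work. First, you need to compare the $l$-schedule numbers to the ordinary ($0$-)schedule numbers: Theorem~\ref{newschedthm} shows the multiset of $l$-schedule numbers is obtained from the $0$-schedule numbers by swapping one copy of $\rho_0$ for $\rho_l$, proved via the partition identity of Lemma~\ref{parlem} (that $(\lambda+\delta_b)\cup\delta_a$ and $(\lambda'+\delta_a)\cup\delta_b$ agree as multisets). Summing the resulting products over deviations $l$ telescopes, since $\sum_{l} q^{\rho_0+\dots+\rho_{l-1}}[\rho_l]_q = [n]_q$, producing the $[n]_q/[k]_q$ factor. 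Second, for the quasisymmetric weight: the resolution is Hicks' factorization (Lemma~\ref{factorlemma}), which shows that for fixed diagonal word $\tau$ the factor carrying $Q_{\ides}$ is a ratio depending only on the consecutive blocks of $\tau$ --- in particular independent of the deviation $l$ --- so the $\area$--$\dinv$ identity lifts verbatim to the full enumeration (Corollary~\ref{withides}). With those two ingredients, summing over $\tau$ with last run of length $k$ and invoking Corollary~\ref{thm:touch} gives Theorem~\ref{thm:touchpref}, and Theorem~\ref{Esymmid} finishes the proof; no small-$n$ pattern-matching or sign bookkeeping is required.
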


The main result of this paper is a proof of Conjecture \ref{conj:sq}. In Section \ref{sec:sched}, we extend a notion of Haglund and Loehr \citep{HL05} and use it to enumerate, by $\area$ and $\dinv$ alone, those preference functions with a fixed set of cars in each diagonal. In Section \ref{sec:shift}, we will discuss the effects of shifting cars between diagonals on the enumeration we obtained in Section \ref{sec:sched}. This will allow us to relate the enumeration of preference functions by $\area$ and $\dinv$ to the enumeration of parking functions by $\area$ and $\dinv$. Finally, in Section \ref{sec:ides}, we will show how to use the results of Section \ref{sec:shift} to relate the full enumerations (using $\area$, $\dinv$, and $\ides$) of preference and parking functions by extending a result of Hicks \cite{Athesis}. This, combined with a symmetric function identity and the Compositional Shuffle Conjecture, proves the Square Paths Conjecture.

In fact, we prove something much stronger: a relationship between the full enumerations of parking and preference functions with the same ``diagonal word" (which we introduce in the next section). This is analogous to Hicks' \citep{Athesis} conjecture that relations between different incarnations of the Compositional Shuffle Conjecture may be refined to the level of parking functions with fixed sets of cars in diagonals. This suggests that there may be quasi-symmetric refinements for the symmetric functions sides of the Shuffle Conjecture and Square Paths Conjecture which correspond to these combinatorial enumerations.\\

\noindent \textbf{Acknowledgements.} The author would like to thank Adriano Garsia and Jim Haglund for their insightful comments and discussions on this topic. This work was partially supported by NSF grant DGE-1144086.


\section{Schedules for preference functions} \label{sec:sched}

In this section we make heavy use of the diagonal word statistic and of the schedule of a parking function. These concepts were introduced by Haglund and Loehr in \citep{HL05} and expanded upon by Hicks in \citep{Athesis}. We follow the latter's notation.

The \emph{diagonal word} of a preference function $Pr$, denoted $\diagword(Pr)$, is a permutation whose runs give the cars in each diagonal of $Pr$ from highest to lowest diagonal. That is, cars from a single diagonal are listed in increasing order. This should not be confused with $Pr$'s word, $\sigma$, which lists cars from each diagonal in the order they actually appear. For example, the two preference functions in Figure \ref{fig:pref} have words $\sigma = 4 \, 5 \, 3 \, 2 \, 1$ and $\sigma = 5 \, 2 \, 3 \, 1 \, 4$, respectively, but diagonal words $4 \, 5 \, 3 \, 1 \, 2$ and $5 \, 2 \, 3\, 1 \, 4$.

This concept was first introduced to enumerate parking functions as follows. Let $\tau \in S_n$. Suppose the last run of $\tau$ has length $k$. Then for $1 \leq i \leq k$, let $w_{i} = i$. For $k < i \leq n$, let $w_i$ be the number of elements of $\tau_{n+1-i}$'s run which are larger than $\tau_{n+1-i}$ plus the number of cars smaller than $\tau_{n+1-i}$ in the next run. If $PF$ is a parking function with diagonal word $\tau$, then $W=(w_i)$ is called its schedule. We also say that $W$ is the schedule of $\tau$. There are $\prod_{i=1}^n w_i$ parking functions with diagonal word $\tau$ and they can be built by inserting the cars of $\tau$ from right to left into an empty parking function. 

\begin{figure}[H]
\begin{center}
\includegraphics[height=2.2in]{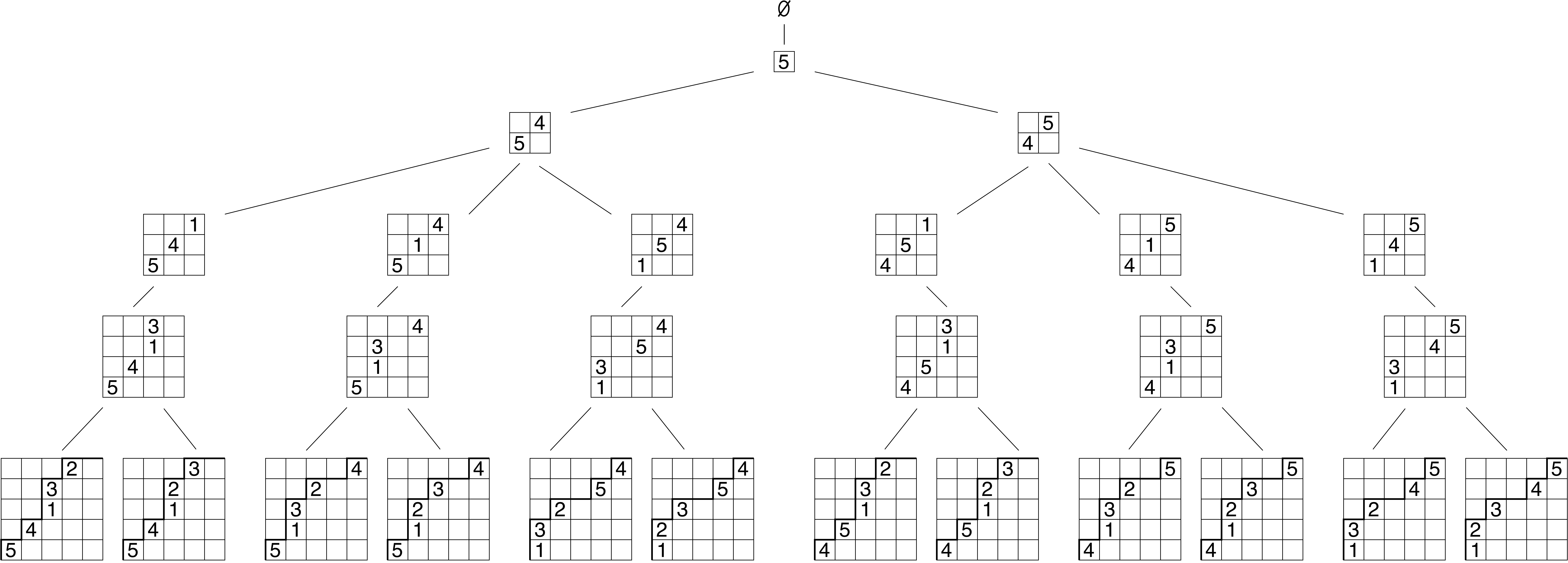}
\caption{All preference functions with diagonal word $2 \, 3 \, 1 \, 4 \, 5$ and deviation $0$.}
\label{fig:PFferm}
\end{center}
\end{figure}
\vspace{-.5cm}

Hicks \cite{Athesis} introduced a visualization of this as a tree. In Figure \ref{fig:PFferm}, we show how parking functions with diagonal word $2 \, 3 \, 1 \, 4 \, 5$ are built by inserting. The schedule numbers of $\tau$ are $(1,2,3,1,2)$. Note that at each level of the tree, the degree of each node is the schedule number corresponding to the car being inserted. Furthermore, the children of each node are arranged so that, from left to right, the change in $\dinv$ between parent and child is $0$, $1$, \dots, $w_i-1$. This is essentially the proof of the following theorem, which is due to Haglund and Loehr \citep{HL05}.

\begin{theorem}[Haglund-Loehr] \label{fermthm}
Let $\tau \in S_n$ with schedule $(w_i)$. Then
$$
\sum_{\substack{PF \in {\cal PF}_n \\ \diagword(PF)=\tau}} t^{\area(PF)} q^{\dinv(PF)} = t^{\maj(\tau)} \prod_{i=1}^n [ w_i ]_q.
$$
\end{theorem}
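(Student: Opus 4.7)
The plan is to realize the theorem via the insertion procedure sketched immediately before the statement: starting from the empty diagram, insert the cars $\tau_n, \tau_{n-1}, \ldots, \tau_1$ of $\tau$ in right-to-left order. I would establish three facts: (i) every parking function with $\diagword(PF) = \tau$ arises exactly once in this way, (ii) $\area(PF) = \maj(\tau)$ for every such $PF$, and (iii) at step $i$ the car $\tau_{n+1-i}$ admits exactly $w_i$ valid placements, producing $\dinv$ increments $0, 1, \ldots, w_i-1$. Combining these yields the claimed generating function $t^{\maj(\tau)} \prod_{i=1}^n [w_i]_q$.

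For (ii), since $l(PF) = 0$ for parking functions, $\area(PF)$ is the sum over cars of the diagonal occupied by that car. If the runs of $\tau$ (left to right) have lengths $l_1, l_2, \ldots, l_r$ and correspond to diagonals $r-1, r-2, \ldots, 0$, then $\area(PF) = \sum_{j=1}^{r}(r-j)\, l_j = \sum_{k=1}^{r-1}(l_1 + \cdots + l_k)$. Since the descents of $\tau$ lie precisely at the positions $l_1 + \cdots + l_k$ for $k = 1, \ldots, r-1$, this sum equals $\maj(\tau)$, proving (ii).

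For (i) and the counting half of (iii), I would induct on $i$. If $c = \tau_{n+1-i}$ lies in the last run of $\tau$ (so $i \leq k$), the $i-1$ already-placed cars form a staircase on the main diagonal and $c$ may enter in any of $w_i = i$ positions. Otherwise ($i > k$), the cars of all later runs of $\tau$ are already placed; $c$ must enter one diagonal above the next run, and the column-increasing-label rule together with the diagonal-word ordering forces $c$'s row to coincide with the row of either a larger car of $c$'s own run (already placed in $c$'s diagonal) or a smaller car of the next run (placed one diagonal lower) --- exactly $w_i$ valid positions.

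The main obstacle is the $\dinv$-bookkeeping portion of (iii): verifying that, ordered appropriately, the $w_i$ placements contribute $\dinv$ increments $0, 1, \ldots, w_i-1$. I would order the valid rows from lowest to highest and argue that each upward jump changes $\dinv$ by exactly $+1$, arising either from the creation of a new primary $\dinv$ pair between $c$ and a larger car of $c$'s run now to $c$'s right, or from the conversion of a secondary $\dinv$ pair between $c$ and a smaller car of the next run into a primary $\dinv$ pair in $c$'s diagonal; in each case no previously counted $\dinv$ pair is destroyed. This shows the $i$th insertion contributes the factor $[w_i]_q$, and combining with the uniform $t^{\maj(\tau)}$ from (ii) completes the proof.
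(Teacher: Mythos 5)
Your overall route is the same as the paper's: the paper attributes this theorem to Haglund--Loehr and sketches exactly your insertion argument (the schedule trees of Figure \ref{fig:PFferm}), and it carries out the identical argument in full for the preference-function analogue, Theorem \ref{newferm}. Your items (i) and (ii) are fine, as is the count of $w_i$ valid placements in (iii): the valid positions are precisely the rows directly above a larger car of $c$'s own run or above a smaller car of the next run.

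However, the step you yourself flag as the main obstacle --- the $\dinv$ bookkeeping --- is stated backwards, and the local mechanism you propose is wrong. Within a diagonal $y = x+d$, ``further right'' is the same as ``higher up,'' so when $c$ (smaller than every already-placed car of its run) sits \emph{low} in its diagonal, it lies to the left of the larger own-run cars and forms a primary $\dinv$ with each of them; placed at the top of its diagonal, it forms none. Likewise, a smaller car $b$ of the next run forms a secondary $\dinv$ with $c$ exactly when $b$ is strictly \emph{above} $c$ (then $b$ is in the lower adjacent diagonal and strictly further right); in particular, placing $c$ directly above $b$ creates no pair with $b$ itself. Hence the increment at the placement directly above a given ``relevant'' car equals the number of relevant cars strictly above it, so reading the $w_i$ placements from \emph{highest to lowest} gives increments $0, 1, \dots, w_i-1$: each \emph{downward} move past a relevant car gains exactly one pair (a primary if the passed car is a larger own-run car, a secondary if it is a smaller next-run car). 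Your claims that upward jumps gain $+1$, that secondary pairs ``convert'' into primary pairs, and that no pair is destroyed are all false: already for $\tau = 2\,3\,1$, inserting car $2$ directly above car $1$ (the lower placement) gives increment $1$, while inserting it above car $3$ (the higher placement) gives $0$. Note this matches the paper's own phrasing in the proof of Theorem \ref{newferm}, where the choices ``ordered from right to left'' contribute $0, 1, \dots, w^{(l)}(c)-1$. Since only the multiset $\{0,1,\dots,w_i-1\}$ of increments enters the product $\prod_{i=1}^n [w_i]_q$, the error is local and fixable, but the verification as you describe it would fail if carried out literally.
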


We extend the notion of schedules to preference functions as follows. Suppose $l \geq 0$ and $\tau \in S_n$ with at least $l+1$ runs. Let $1 \leq c \leq n$. If $c$ is in one of the last $l$ runs of $\tau$, then define $w^{(l)}(c)$ to be the number of elements smaller than $c$ in its own run plus the number of elements larger than $c$ in the previous run. If $c$ is in the \emph{$(l+1)$-st from last} run, define $w^{(l)}(c)$ to be the number of elements to the right of $c$ in the same run. Otherwise define $w^{(l)}(c)$ to be the number of elements larger than $c$ in its own run plus the number of elements smaller than $c$ in the next run.

For example, let $\tau = 2 \, 3 \, 1 \, 4 \, 5$. Then $\tau$ consists of 2 runs and we have $w^{(1)}(3) = 1$, $w^{(1)}(2)=2$, $w^{(1)}(1)=2$, $w^{(1)}(4)=1$, and $w^{(1)}(5)=2$. We say that $(w^{(l)}(c))$ are the $l$-schedule numbers of $\tau$. It is easy to see that the original schedule numbers $(w_i)$ correspond to the $0$-schedule numbers of $\tau$, but they appear in a different order. We will use the new schedule numbers $( w^{(l)}(c) )$ to build preference functions with diagonal word $\tau$ and deviation $l$. See Figure \ref{fig:Prferm} for the tree whose leaves are preference functions with diagonal word $2 \, 3 \, 1 \, 4 \, 5$ and deviation $l=1$. Note that $w^{(1)}(c)$ gives degrees of the nodes when car $c$ is inserted.

\begin{figure}[H]
\begin{center}
\includegraphics[height=2.2in]{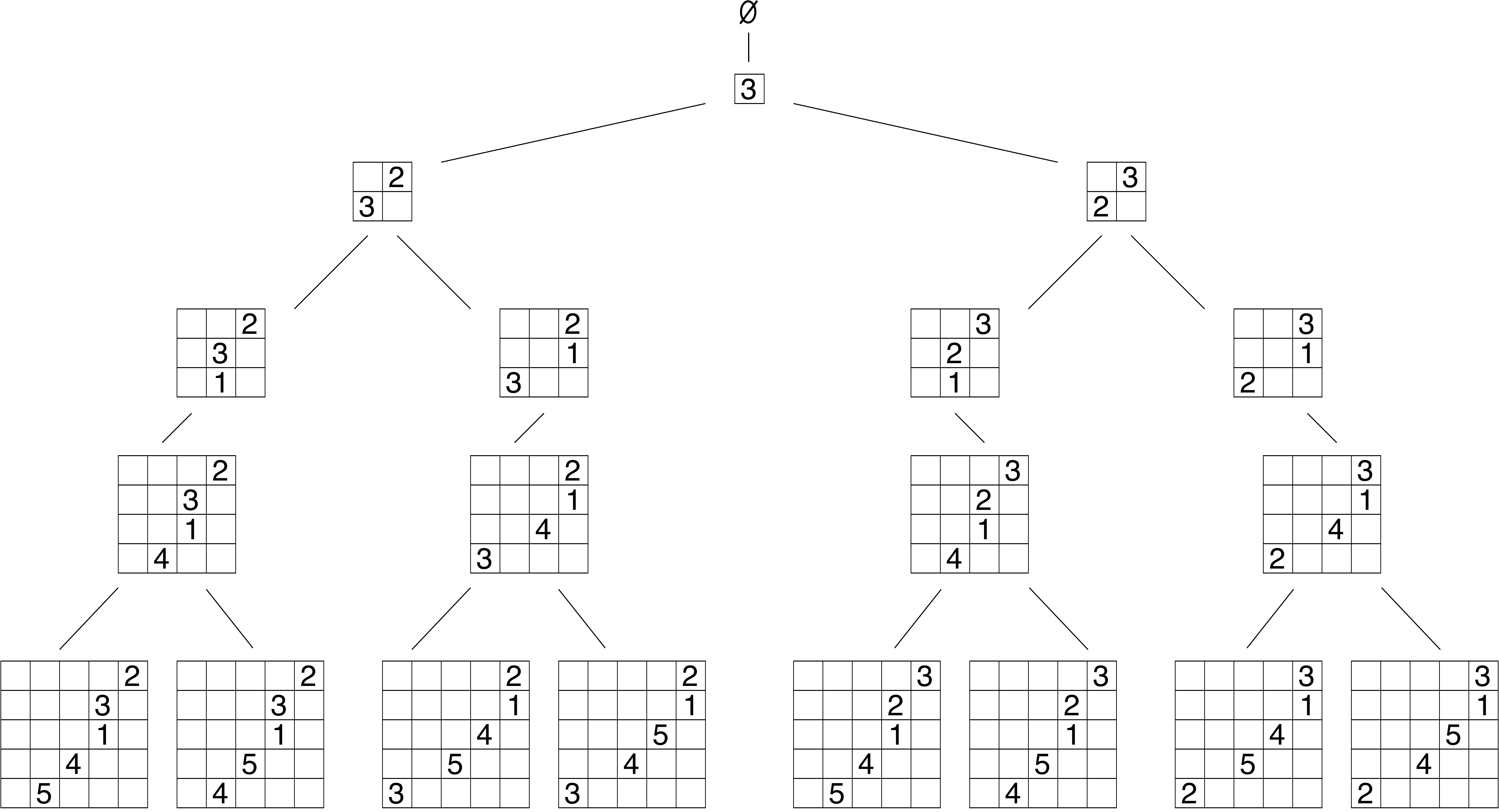}
\caption{All preference functions with diagonal word $2 \, 3 \, 1 \, 4 \, 5$ and deviation $1$.}
\label{fig:Prferm}
\end{center}
\end{figure}
\vspace{-.5cm}

\begin{theorem} \label{newferm}
Let $\tau \in S_n$ with runs of lengths $\rho_k, \dots, \rho_1, \rho_0$. Let $0 \leq l \leq k$.
$$
\sum_{\substack{Pr \in {\cal P}ref_n \\ \diagword(Pr)=\tau \\ l(Pr)=l}} t^{\area(Pr)} q^{\dinv(Pr)}
= t^{\maj(\tau)} q^{\rho_0 + \dots + \rho_{l-1}} \prod_{c=1}^n [ w^{(l)}(c) ]_q.
$$
\end{theorem}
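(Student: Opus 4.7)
My plan is to extend the Haglund–Loehr insertion-tree argument (which proves Theorem \ref{fermthm}) to preference functions. We construct every $Pr$ with $\diagword(Pr)=\tau$ and $l(Pr)=l$ by inserting the cars of $\tau$ one at a time, from right to left, into an initially empty configuration. At each step, I claim the newly inserted car $c$ has exactly $w^{(l)}(c)$ valid positions, and the primary-plus-secondary-$\dinv$ increments across these positions are $0,1,\ldots,w^{(l)}(c)-1$. Multiplying over the tree then yields the factor $\prod_{c=1}^{n}[w^{(l)}(c)]_q$.

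The $t$-exponent and the tertiary-$\dinv$ contribution are static (depending only on $\tau$ and $l$) and can be handled first. A car in the run $\rho_i$ lies in diagonal $i-l$, contributing $(i-l)+l=i$ to $\area(Pr)$; summing gives $\sum_i i\rho_i$, which equals $\maj(\tau)$ since the descents of $\tau$ fall at the partial sums $\rho_k,\ \rho_k+\rho_{k-1},\ \ldots,\ \rho_k+\cdots+\rho_1$. The tertiary $\dinv$ counts cars strictly below $y=x$, i.e., those in the last $l$ runs $\rho_0,\ldots,\rho_{l-1}$, yielding the factor $q^{\rho_0+\cdots+\rho_{l-1}}$.

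The heart of the argument is the insertion step, which splits into three cases. If $c\in\rho_i$ with $i>l$ (above the main diagonal), the original Haglund–Loehr schedule applies essentially verbatim: $c$ must be supported by a car in the next-lower diagonal, and the $w^{(l)}(c)$ valid positions are indexed by the larger cars already placed in $\rho_i$ together with the smaller cars of $\rho_{i-1}$. If $c\in\rho_l$ (the main diagonal), the situation mirrors the last-run case of Theorem \ref{fermthm}: the diagonal-$0$ cars already present are those strictly to the right of $c$ in $\rho_l$, and $c$ can be placed into any gap among them, giving $w^{(l)}(c)$ choices. If $c\in\rho_i$ with $i<l$ (below the main diagonal), the local geometry of the path is reflected through $y=x$, and so the schedule is reflected as well: (smaller cars in $\rho_i$) plus (larger cars in the next-higher run $\rho_{i+1}$). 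In every case one must verify that, ordered appropriately, the $w^{(l)}(c)$ choices produce exactly the $\dinv$-increments $0,1,\ldots,w^{(l)}(c)-1$.

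I expect the below-diagonal case to be the main obstacle. Unlike the above-diagonal case, the cars counted by its schedule (smaller cars in $c$'s own run and larger cars in the previous run) are inserted \emph{after} $c$ in the right-to-left order on $\tau$, so the schedule cannot be read as a direct count of currently placed cars. To handle this, I would set up an explicit geometric correspondence—essentially a local reflection of the below-diagonal portion of $Pr$ through $y=x$—that aligns below-diagonal insertion with the above-diagonal Haglund–Loehr picture, thereby reducing the third case to the first. Combining the three verified cases with the static factors then gives the product formula in the theorem.
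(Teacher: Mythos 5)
Your overall architecture matches the paper's: factor out $\area$ as $t^{\maj(\tau)}$ (your computation of this is correct), factor out the tertiary $\dinv$ as $q^{\rho_0+\cdots+\rho_{l-1}}$, and then realize $\prod_c [w^{(l)}(c)]_q$ by an insertion tree in which car $c$ has $w^{(l)}(c)$ placements with $\dinv$ increments $0,1,\dots,w^{(l)}(c)-1$. The above-diagonal and main-diagonal cases are handled in the paper essentially as you describe. But the below-diagonal case --- which you yourself identify as ``the main obstacle'' --- is exactly the new content of this theorem, and your proposal leaves it unproven: you defer it to a ``local reflection through $y=x$'' correspondence that is only gestured at, not constructed. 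This is a genuine gap, and moreover the specific route you commit to (a uniform right-to-left insertion order on $\tau$) makes the gap hard to close. Reading $\tau$ right to left means the lowest diagonal $-l$ is populated first, before any cars exist in diagonal $-l+1$; but below the main diagonal the column-increasing condition couples each car to a \emph{larger} car in the next \emph{higher} diagonal, so your partial configurations are not valid fragments in any obvious sense, and as you correctly observe, the quantities counted by $w^{(l)}(c)$ (smaller cars in $c$'s run, larger cars in the previous run) are precisely the cars not yet placed. So the number of ``valid positions'' at insertion time is not $w^{(l)}(c)$, and no clean increment sequence is available.

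The paper's fix is simpler and more direct than a reflection bijection: it abandons the uniform right-to-left order. After inserting the cars of the first $k+1-l$ runs right to left (diagonals $0$ up to $k-l$, the Haglund--Loehr regime), it inserts the cars of the remaining $l$ runs \emph{left to right}, starting in diagonal $-1$ and moving down a diagonal at each new run. With this order, when a below-diagonal car $c$ is inserted, the entire previous run (the next higher diagonal) is already present, as are exactly the smaller cars of $c$'s own run; $c$ is placed either directly below a larger car of the previous run or directly left of a previously inserted (hence smaller) car in its own run, giving exactly $w^{(l)}(c)$ choices with increments $0,\dots,w^{(l)}(c)-1$ when read left to right. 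This order reversal is precisely the mirror symmetry your reflection heuristic is pointing at, but implemented at the level of the insertion algorithm rather than as a geometric bijection; without it (or a fully worked-out substitute), your case three, and hence the theorem, does not go through.
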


\begin{proof}

Each element in the \emph{$i$-th from last} run of $\tau$ will contribute $i-1$ to $\area$. Therefore the factor $t^{\maj(\tau)}$ on the right hand side of Theorem \ref{newferm} accounts for the $\area$ on the left hand side. It remains to enumerate the desired preference functions by $\dinv$.

To do this, first insert each car $c$ which occurs in the first $k+1-l$ runs of $\tau$ from right to left starting in diagonal $0$ and moving up a diagonal between runs. At each step, we will have $w^{(l)}(c)$ choices which, when ordered from right to left, will contribute $0, 1, \dots, w^{(l)}(c)-1$ to primary and secondary $\dinv$. Since these cars belong to nonnegative diagonals, they contribute nothing to the tertiary $\dinv$.

Next, insert the cars of the remaining $l$ runs from left to right starting in diagonal $-1$ and moving into the next lowest diagonal at the start of each new run. Such a car $c$ can either appear directly below a larger car from the previous run (i.e., an element from the next highest diagonal of $\tau$) or directly left of a (previously inserted, hence smaller) car in the same run (i.e., same diagonal). Therefore we have $w^{(l)}(c)$ choices. These choices, when ordered from left to right, will contribute $0, 1, \dots, w^{(l)}(c)-1$ to primary and secondary $\dinv$.

Since these cars appear below diagonal 0, they also contribute to tertiary dinv. There are $\rho_0 + \dots + \rho_{l-1}$ such cars, so the tertiary $\dinv$ ``factors out,'' just as $\area$ did. And, as we observed above, each car contributes $[ w^{(l)}(c) ]_q$ to the enumeration of primary and secondary $\dinv$.
\end{proof}


\section{Shifting diagonals and schedules} \label{sec:shift}

This section is devoted to proving the following general result about preference functions.
\begin{theorem} \label{newschedthm}
Let $\tau \in S_n$ with schedule $(w_i)$. Suppose that the runs of $\tau$ have lengths $\rho_r, \dots, \rho_1, \rho_0$. If $1 \leq l \leq r$, then the multi-set of $l$-schedule numbers of $\tau$ is equal to $\{ w_i : 1 \leq i \leq n \} \cup \{ \rho_l \} \setminus \{ \rho_0 \} $.
Hence
$$
\sum_{\substack{Pr \in {\cal P}ref_n \\ \diagword(Pr)=\tau \\ l(Pr)=l}} t^{\area(Pr)} q^{\dinv(Pr)} = t^{\maj(\tau)} q^{\rho_0 + \dots + \rho_{l-1}} \frac{[\rho_l]_q}{[\rho_0]_q} \prod_{i=1}^{n} [w_i]_q.
$$
\end{theorem}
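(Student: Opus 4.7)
The formula in the theorem follows from the multi-set identity combined with Theorem~\ref{newferm}: the substitution $\{w^{(l)}(c)\} = \{w_i\} \cup \{\rho_l\} \setminus \{\rho_0\}$ (as multi-sets) transforms $\prod_c [w^{(l)}(c)]_q$ into $([\rho_l]_q/[\rho_0]_q)\prod_i [w_i]_q$, which is exactly the extra factor needed. So the main task is to establish the multi-set identity.

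My plan is to induct on $l$, with $l=0$ as the trivial base case. Writing $R_j$ for the $(j{+}1)$-st from last run of $\tau$ (of length $\rho_j$) and setting
$$
f(c) := |\{b \in R_l : b < c\}| + |\{b \in R_{l+1} : b > c\}|, \quad c \in R_l \cup R_{l+1},
$$
a case-by-case comparison of the definitions of $w^{(l)}$ and $w^{(l+1)}$ shows that they agree on every car outside $R_l \cup R_{l+1}$. On $R_{l+1}$ the values switch from $\{f(c) : c \in R_{l+1}\}$ to $\{1, \dots, \rho_{l+1}\}$, and on $R_l$ they switch from $\{1, \dots, \rho_l\}$ to $\{f(c) : c \in R_l\}$. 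Hence, after cancelling the common element $\rho_{l+1}$ (resp.\ $\rho_l$), the inductive step reduces to the following adjacent-runs lemma: for any two disjoint nonempty finite subsets $L, L' \subset \mathbb{Z}_+$ of sizes $m, m' \geq 1$, with $f(c) := |\{b \in L : b < c\}| + |\{b \in L' : b > c\}|$ for $c \in L \cup L'$,
$$
\{f(c) : c \in L\} \cup \{1, \dots, m' - 1\} \;=\; \{f(c) : c \in L'\} \cup \{1, \dots, m - 1\}
$$
as multi-sets.

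I would prove the lemma by a second induction, this time on $m + m'$. The base case $m = m' = 1$ is a direct check. For the inductive step, let $d = \max(L \cup L')$ and split two cases. If $d \in L$, then $f(d) = m - 1$, and removing $d$ from $L$ leaves the $f$-values on all other elements unchanged. If $d \in L'$, then $f(d) = m$, and removing $d$ from $L'$ decreases every other $f$-value by exactly $1$ (because $d$ was the unique element of $L'$ exceeding any $c \neq d$). In either case, apply the inductive hypothesis to $L \cup L' \setminus \{d\}$ and reassemble; passing to the generating function $\sum_{x \in M} q^x$ of a multi-set $M$ makes the bookkeeping transparent, and the element $\{f(d)\}$ together with the ``balance'' sets $\{1, \dots, m-1\}, \{1, \dots, m'-1\}$ fit together automatically. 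Edge cases where removal empties a side ($m=1$ with $d \in L$, or $m'=1$ with $d \in L'$) reduce to one-line direct verifications.

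The main obstacle is spotting the right form of the adjacent-runs lemma, particularly why the balance sets should be $\{1, \dots, m-1\}$ and $\{1, \dots, m'-1\}$ rather than, say, $\{0, \dots, m-1\}$ or $\{1, \dots, m\}$. Once the statement is correctly formulated, both layers of induction proceed by routine algebraic manipulation, and no further combinatorial insight is required.
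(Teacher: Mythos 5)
Your proposal is correct, and its outer architecture coincides with the paper's: reduce the theorem to a multi-set identity among schedule numbers, observe that passing from $w^{(l)}$ to $w^{(l+1)}$ alters values only on the two adjacent runs $R_l$ and $R_{l+1}$, and telescope over $l$. Your case analysis of which values switch (on $R_l$ from $\{1,\dots,\rho_l\}$ to $\{f(c)\}$, on $R_{l+1}$ from $\{f(c)\}$ to $\{1,\dots,\rho_{l+1}\}$) is exactly the paper's reduction to a single-descent $\tau$ with $l=1$, and your reading of the definition (the $(l+1)$-st-from-last run receives the values $\{1,\dots,\rho_l\}$, so the subtraction of $\rho_l$ is well defined) matches the paper's examples and its remark that the leftmost element of that run has $w^{(l)}(c)=\rho_l$. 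Where you genuinely diverge is the key lemma. Your adjacent-runs identity is equivalent to the paper's Lemma \ref{parlem}: if $\lambda \subseteq |L| \times |L'|$ records how many elements of $L$ lie below each element of $L'$, then $\{f(c): c \in L'\} = \{\lambda + \delta_{|L'|}\}$ and $\{f(c): c \in L\} = \{\lambda' + \delta_{|L|}\}$, and adjoining a $0$ to each side of your identity yields precisely $\{(\lambda+\delta_b)\cup\delta_a\} = \{(\lambda'+\delta_a)\cup\delta_b\}$. But the proofs differ: the paper inducts within a fixed $a \times b$ rectangle by removing a single corner cell of $\lambda$ (which changes exactly one entry on each side by one), whereas you induct on $|L|+|L'|$ by deleting the global maximum $d$ --- in partition terms stripping a full top row of $\lambda$ (case $d \in L'$, with the uniform downward shift) or an empty column (case $d \in L$). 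I checked your shift bookkeeping and it closes: shifting the inductive identity up by $1$ and adjoining the entry $1$ to both sides reassembles $\{1,\dots,m'-1\}$ on the left and $\{1,\dots,m-1\}\cup\{f(d)\}$ with $f(d)=m$ on the right, and the edge cases $m=1$, $m'=1$ check directly. What your route buys: the lemma is stated for two arbitrary disjoint sets and applies verbatim inside a general $\tau$, so the paper's ``it suffices to consider $\tau$ with a single descent'' step and all conjugate-partition machinery disappear. What the paper's route buys: the geometric explanation that every removable corner of $\lambda$ lies in a row and column of equal length, which is the conceptual reason the identity holds. One small wording slip on your side: in the case $d \in L'$, $d$ is not ``the unique element of $L'$ exceeding'' a given $c$; the correct (and sufficient) point, which your argument actually uses, is that $d$ exceeds every other element of $L \cup L'$, so its removal decrements each count $|\{b \in L' : b > c\}|$ by exactly one.
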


Our proof of this theorem requires a surprising lemma regarding partitions. See Figure \ref{fig:parts} for an illustration of the lemma applied to $\lambda = (3,3,2,1,0)$ with $a=4$ and $b=5$.

\begin{figure}[H]
\begin{center}
\includegraphics[width=3in]{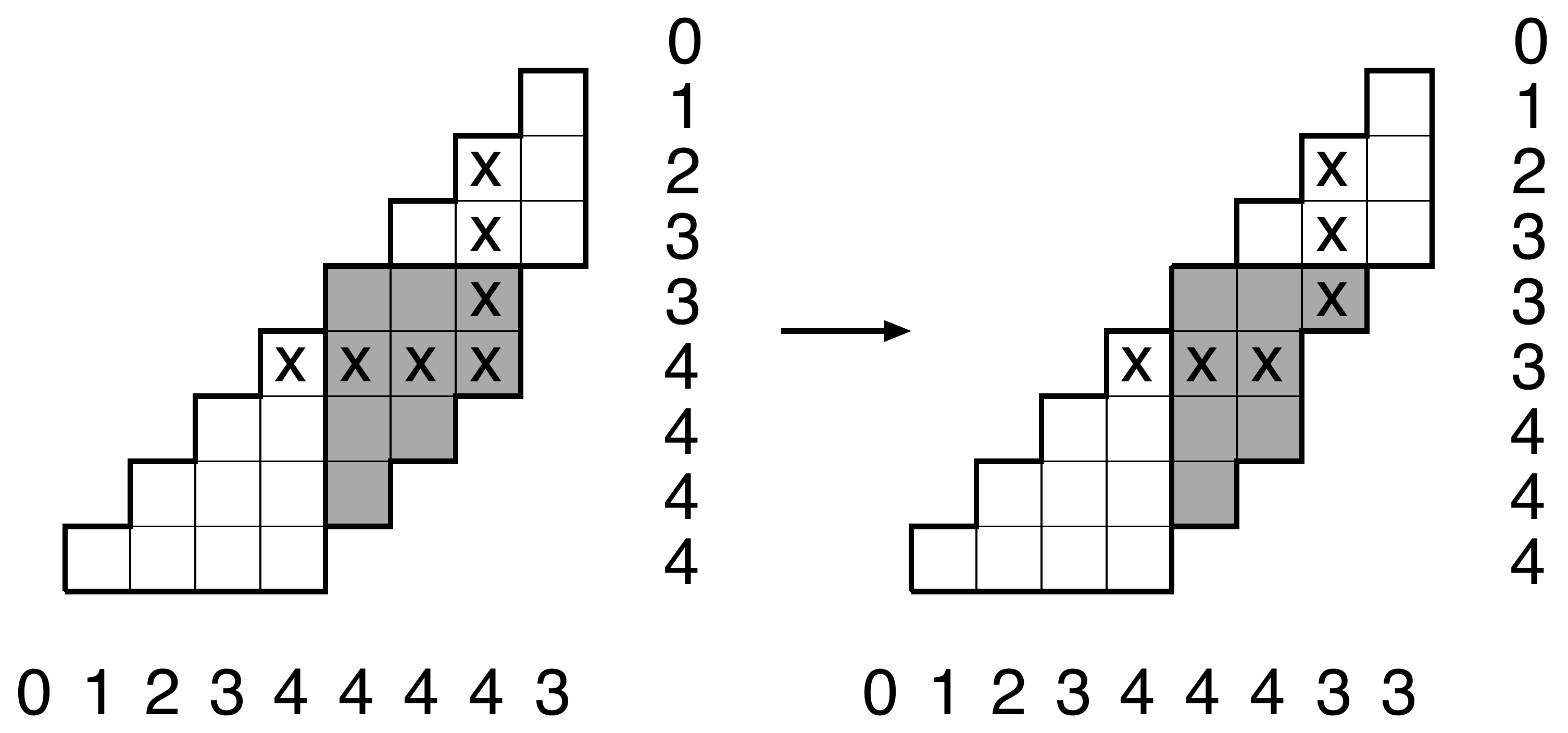}
\caption{Diagrams of $( (3,3,2,1,0) + \delta_5 ) \cup \delta_4$ and $( (3,2,2,1,0) + \delta_5 ) \cup \delta_4$.}
\label{fig:parts}
\end{center}
\end{figure}
\vspace{-.5cm}

\begin{lemma} \label{parlem}
Let $a,b > 0$ and let $\lambda = (\lambda_1 \geq \lambda_2 \geq \dots \geq \lambda_b)$ be a partition, with nonnegative parts, contained in the rectangle $a \times b$. That is $\lambda_1 \leq a$ and $l(\lambda)=b$.
We will write $\lambda'$ for the conjugate of $\lambda$ considered as a partition in the $b \times a$ rectangle. We also write $\delta_n$ for the sequence $(0,1,\dots,n-1)$ for all $n \in \mathbb{N}$. Then the sequences
$$
( \lambda + \delta_b) \cup \delta_a \hbox{ and } ( \lambda' + \delta_a ) \cup \delta_b
$$
have the same multi-set of entries. Here the sum of sequences is coordinate-wise and $\cup$ denotes concatenation.
\end{lemma}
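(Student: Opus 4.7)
The plan is to proceed by induction on $|\lambda| = \sum_i \lambda_i$. The base case $|\lambda|=0$ (so $\lambda$ is the all-zero sequence of length $b$) is immediate: $\lambda + \delta_b = \delta_b$ and $\lambda' = (0,\dots,0)$ of length $a$, so both sides of the identity reduce to the multi-set $\delta_b \cup \delta_a$.

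For the inductive step, I would pick a removable corner $(i_0, j_0)$ of $\lambda$, that is, one with $j_0 = \lambda_{i_0}$ and $\lambda_{i_0} > \lambda_{i_0+1}$ (with the convention $\lambda_{b+1}=0$), and let $\mu$ be $\lambda$ with that cell removed. Going from $\mu$ to $\lambda$ changes exactly one entry of the sequence $\lambda + \delta_b$, namely the $i_0$-th, which increases by $1$ (from $\mu_{i_0}+i_0-1 = \lambda_{i_0}+i_0-2$ to $\lambda_{i_0}+i_0-1$); and it changes exactly one entry of $\lambda' + \delta_a$, namely the $j_0$-th, which also increases by $1$ (since $\mu'_{j_0} = \lambda'_{j_0}-1$). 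So on each side, exactly one entry gets incremented by $1$. Starting from the inductive multi-set equality for $\mu$, it then suffices to check that the two entries being incremented had the same original value. This reduces to verifying $i_0 = \lambda'_{\lambda_{i_0}}$, which is exactly the corner condition: because $\lambda_1 \geq \dots \geq \lambda_{i_0} > \lambda_{i_0+1}$, there are precisely $i_0$ parts of $\lambda$ of size $\geq \lambda_{i_0}$, so by the definition of conjugate, $\lambda'_{\lambda_{i_0}} = i_0$.

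The main obstacle is a very mild one --- just carefully tracking which entries change on each side and matching their values --- rather than anything conceptually deep. As a slicker alternative that avoids induction, I note that the multi-set identity is equivalent to the polynomial identity $\sum_{i=1}^b q^{\lambda_i+i-1} + \sum_{k=0}^{a-1} q^k = \sum_{j=1}^a q^{\lambda'_j+j-1} + \sum_{k=0}^{b-1} q^k$ (since a multi-set of nonnegative integers is determined by its generating polynomial in $q$), and both $\sum_{i=1}^b q^{\lambda_i+i-1} - \sum_{k=0}^{b-1} q^k$ and $\sum_{j=1}^a q^{\lambda'_j+j-1} - \sum_{k=0}^{a-1} q^k$ rewrite via $q^{\lambda_i+i-1} - q^{i-1} = (q-1) q^{i-1}[\lambda_i]_q$ as $(q-1)\sum_{(i,j) \in \lambda} q^{i+j-2}$, which is manifestly invariant under swapping the roles of rows and columns, i.e., under $\lambda \leftrightarrow \lambda'$.
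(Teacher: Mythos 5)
Your proposal is correct, and in fact contains two valid proofs. Your inductive argument is essentially the paper's own proof: the paper also inducts by removing a single cell at a removable corner, observing that this increments exactly one entry of $(\lambda+\delta_b)\cup\delta_a$ and exactly one entry of $(\lambda'+\delta_a)\cup\delta_b$, and that the affected entries agree. The only difference is that the paper commits to a specific corner (the one in row $k$, where $\lambda_1$ occurs $k$ times in $\lambda$, so the matching values are $\lambda_1+k-1$ on both sides), whereas you allow an arbitrary removable corner $(i_0,j_0)$ and verify the matching via $\lambda'_{\lambda_{i_0}}=i_0$; the paper even remarks after its proof that \emph{all} removable corners lie in rows and columns of equal length, which is precisely your observation. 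Your alternative generating-function argument, however, is a genuinely different route not taken in the paper: encoding each multi-set by its polynomial $\sum q^{\text{entry}}$ and noting that both sides reduce, via $q^{\lambda_i+i-1}-q^{i-1}=(q-1)q^{i-1}[\lambda_i]_q$, to the common quantity $(q-1)\sum_{(i,j)\in\lambda} q^{i+j-2}$, which is manifestly invariant under $\lambda\leftrightarrow\lambda'$ since the exponent $i+j-2$ is symmetric in the cell coordinates. That computation is non-inductive, shorter, and arguably more conceptual --- it identifies the invariant (the content generating function of the diagram) underlying the multi-set coincidence --- while the inductive/corner proof shared by you and the paper supplies the geometric picture of why individual entries pair up.
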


\begin{proof}
Note that the claim holds if $\lambda$ is the empty partition. So let $\emptyset \neq \lambda$ be contained in the rectangle $a \times b$ and suppose the claim holds for all partitions contained in $\lambda$ (with $b$ parts). Suppose $\lambda_1$ occurs $k$ times in $\lambda$. Then the $k$-th entry of $\lambda + \delta_b$ is $\lambda_1 + k - 1$. Furthermore, the $\lambda_1$-st entry of $\lambda' + \delta_a$ is $k + \lambda_1 - 1$. For example, in Figure \ref{fig:parts}, $k=2$ and $\lambda_1=3$, so the marked row corresponds to the $k$-th entry of $\lambda + \delta_b$ and the marked column corresponds to the $\lambda_1$-st entry of $\lambda' + \delta_b$, and they have equal length.

Let $\mu$ be the partition obtained from $\lambda$ by reducing its $k$-th entry from $\lambda_1$ to $\lambda_1-1$. E.g., if $\lambda = (3,3,2,1,0)$ then $\mu = (3,2,2,1,0)$. The entries of $\mu + \delta_b$ are identical to the entries of $\lambda + \delta_b$ \emph{except} that the $k$-th entry is now $(\lambda_1-1) + k-1$. Similarly, the only entry of $\mu'+\delta_a$ which differs from $\lambda'+\delta_a$ is the $\lambda_1$-st entry, which is now $(k-1) + \lambda_1 -1$.

For any sequence $\sigma$, let $\{\sigma\}$ denote the multi-set of $\sigma$'s entries. Then
$$
\{ ( \lambda + \delta_b ) \cup \delta_a \} = \{ (\mu + \delta_b) \cup \delta_a \} \cup \{ \lambda_1 + k - 1\} \setminus \{ (\lambda_1 - 1) + k - 1 \}
$$
and
$$
\{ ( \lambda' + \delta_a ) \cup \delta_b \} = \{ (\mu' + \delta_a) \cup \delta_b \} \cup \{ k + \lambda_1 - 1\} \setminus \{ (k-1) + \lambda_1 - 1 \}.
$$
Since the claim holds for $\mu$, it also holds for $\lambda$. By induction, it holds for all partitions.
\end{proof}

In Figure \ref{fig:parts} we can see the geometric intuition behind our proof of the Lemma. Namely, the marked corner lies in a row and a column of equal length. In fact \emph{all} removable corners of $\lambda$ lie in equal rows and columns. Hence removing any one of them preserves the correspondence between row parts (i.e., $(\lambda + \delta_b) \cup \delta_a$) and column parts (i.e., $(\lambda'+\delta_a) \cup \delta_b$).

\begin{proof}[Proof of Theorem \ref{newschedthm}]
 
We claim that
\begin{equation} \label{eq:ws}
\{ w^{(l-1)}(c) : 1 \leq c \leq n\} \setminus \{ \rho_{l-1} \} = \{ w^{(l)}(c) : 1 \leq c \leq n\} \setminus \{ \rho_{l} \}.
\end{equation}
as multi-sets for all $1 \leq l \leq r$. Note that if $c$ is the leftmost element of the \emph{$(m+1)$-st from last} run, then $w^{(m)}(c) = \rho_m$, hence there is no trouble with the multi-set subtractions above. Once (\ref{eq:ws}) is shown, we will have
$$
\{ w^{(0)}(c) : 1 \leq c \leq n\} \setminus \{ \rho_0 \} =  \{ w^{(m)}(c) : 1 \leq c \leq n\} \setminus \{ \rho_{m} \}.
$$
for each $1 \leq m \leq r$, which is equivalent to the desired formula.

Let $1 \leq l \leq r$. Note that $w^{(l-1)}(c) = w^{(l)}(c)$ unless $c$ is in the \emph{$l$-th or $(l+1)$-st from last} run of $\tau$. This is because the calculation of a schedule number depends only on its place $\tau$ and whether the car in question lies in a positive, zero, or negative diagonal. Shifting the deviation by one only changes the positive/zero/negative ``status" of cars from two runs. For example, consider the case $\tau = 3 \, 7 \, 1 \, 5 \, 8 \, 2 \, 6 \, 4$ with $l=1,2,3$.
\begin{align*}
        c \,\, = & \hspace{.1in} 3 \, 7 \hspace{.1in} 1 \, 5 \, 8 \hspace{.1in} 2 \, 6 \hspace{.1in} 4 \\
w^{(0)}(c) = & \hspace{.1in} 2 \, 2 \hspace{.1in} 2 \, 2 \, 2 \hspace{.1in} 1 \, 1 \hspace{.1in} 1 \\
w^{(1)}(c) = & \hspace{.1in} 2 \, 2 \hspace{.1in} 2 \, 2 \, 2 \hspace{.1in} 2 \, 1 \hspace{.1in} 1 \\
w^{(2)}(c) = & \hspace{.1in} 2 \, 2 \hspace{.1in} 3 \, 2 \, 1 \hspace{.1in} 2 \, 2 \hspace{.1in} 1 \\
w^{(3)}(c) = & \hspace{.1in} 2 \, 1 \hspace{.1in} 2 \, 2 \, 2 \hspace{.1in} 2 \, 2 \hspace{.1in} 1
\end{align*}
We can see here that schedule numbers only change within two runs of $\tau$ whenever we shift $l$. Therefore it is sufficient to prove our claim for $\tau$ with a single descent and $l=1$ (that is, for the case when the preference functions in question are contained in two diagonals).

Suppose $\tau \in S_n$ with a single descent. For a finite set $A$, let $A^\uparrow$ denote the word consisting of the elements of $A$ in increasing order. Then $\tau = B^\uparrow \, A^\uparrow$ for some disjoint $A,B$. Let $\lambda \subseteq |A| \times |B|$ be the partition whose $i$th part is the number of elements of $A$ which are smaller than the $i$-th largest element of $B$. Then $\lambda'$ is the partition whose $j$th part is the number of elements of $B$ which are larger than the $j$-th smallest element of $A$.

Let $w^{(l)}_i = w^{(l)}(c)$ for $c= \tau_{n+1-i}$. Then for $i$ from $1$ to $|A|$, $w^{(0)}_i = i$, and for $j$ from $1$ to $|B|$, $w^{(0)}_{|A|+j} =  \lambda_j + j-1$. Hence the $0$-schedule numbers of $\tau$ form the multi-set $\{ ( \lambda + \delta_{|B|}) \cup \delta_{|A|} \} \cup \{|A|\} \setminus \{0\} $. On the other hand, for $i$ from $1$ to $|A|$, $w^{(1)}_i = \lambda'_{|A|-i+1} + |A|-i$, and for $j$ from $1$ to $|B|$, $w^{(1)}_{|A|+j} = j$. Then the $1$-schedule numbers of $\tau$ form $\{ ( \lambda' + \delta_{|A|}) \cup \delta_{|B|} \} \cup \{|B|\} \setminus \{0\}$.

For example, consider $\tau = 3 \, 4 \, 5 \, 8 \, 1 \, 2 \, 6 \, 7 \, 9$. Then $A = \{1,2,6,7,9\}$ and $B = \{3,4,5,8\}$. This gives $\lambda = (4,2,2,2)$ and $\lambda' = (4,4,1,1,0)$. Furthermore, we have
\begin{align*}
c \, \, = & \hspace{.1in} 3 \, 4 \, 5 \, 8 \hspace{.1in} 1 \, 2 \, 6 \, 7 \, 9\\
w^{(0)}(c) = & \hspace{.1in} 5 \, 4 \, 3 \, 4 \hspace{.1in} 5 \, 4 \, 3 \, 2 \, 1\\
= & \hspace{.08in} \substack{3 \\ + \\ 2} \, \substack{2 \\ + \\ 2} \, \substack{1 \\ + \\ 2} \, \substack{0 \\ + \\ 4} \hspace{.08in} 5 \, 4 \, 3 \, 2 \, 1\\
w^{(1)}(c) = & \hspace{.1in} 4 \, 3 \, 2 \, 1 \hspace{.1in} 4 \, 5 \, 3 \, 4 \, 4\\
= & \hspace{.1in} 4 \, 3 \, 2 \, 1 \hspace{.08in} \substack{ 0 \\ + \\ 4} \, \substack{1 \\ + \\ 4} \, \substack{2 \\ + \\ 1} \, \substack{3 \\ + \\ 1} \, \substack{4 \\ + \\ 0}
\end{align*}

If we remove a single copy of $\rho_0 = |A|$ from $\{ w^{(0)}_i \}$ and a single copy of $\rho_1=|B|$ from $\{ w^{(1)}_i \}$ and insert the missing $0$'s, then Lemma \ref{parlem} applies. Hence $\{ w^{(0)}_i \} \setminus \{ \rho_{0} \} = \{ w^{(1)}_i \} \setminus \{ \rho_{1} \}$ as desired.
\end{proof}

\begin{corollary} \label{noides}
Let $\tau \in S_n$ with schedule $(w_i)$ and let $k$ be the length of its last run. We have
$$
\sum_{\substack{Pr \in {\cal P}ref_n \\ \diagword(Pr)=\tau}} t^{\area(Pr)} q^{\dinv(Pr)}
= t^{\maj(\tau)} \frac{[n]_q}{[k]_q} \prod_{i=1}^{n} [w_i]_q
= \frac{[n]_q}{[k]_q} \sum_{\substack{PF \in {\cal PF}_n \\ \diagword(PF)=\tau}} t^{\area(PF)} q^{\dinv(PF)}.
$$
\end{corollary}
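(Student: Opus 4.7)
The plan is to obtain Corollary~\ref{noides} by summing Theorem~\ref{newschedthm} over all admissible deviations $l$, reducing the $l=0$ case to the parking function enumeration of Theorem~\ref{fermthm}, and then collapsing the resulting sum with a well-known $q$-integer identity. Concretely, if $\tau$ has runs of lengths $\rho_r,\dots,\rho_1,\rho_0$, then every $Pr$ with $\diagword(Pr)=\tau$ has deviation $l(Pr)\in\{0,1,\dots,r\}$, so the left-hand side splits as
$$
\sum_{l=0}^{r}\sum_{\substack{Pr \in {\cal P}ref_n \\ \diagword(Pr)=\tau \\ l(Pr)=l}} t^{\area(Pr)} q^{\dinv(Pr)}.
$$

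The $l=0$ term is, by Theorem~\ref{fermthm}, exactly $t^{\maj(\tau)}\prod_{i=1}^n [w_i]_q$. For each $1\le l\le r$, Theorem~\ref{newschedthm} rewrites the corresponding inner sum as $t^{\maj(\tau)}q^{\rho_0+\cdots+\rho_{l-1}}\tfrac{[\rho_l]_q}{[\rho_0]_q}\prod_{i=1}^n [w_i]_q$. Factoring out $t^{\maj(\tau)}\prod_i [w_i]_q / [\rho_0]_q$, the identity we want reduces to the purely $q$-combinatorial claim
$$
[\rho_0]_q + \sum_{l=1}^{r} q^{\rho_0+\rho_1+\cdots+\rho_{l-1}}\,[\rho_l]_q \;=\; [n]_q.
$$
This is immediate by iterating $[a+b]_q=[a]_q+q^a[b]_q$ on $[\rho_0+\rho_1+\cdots+\rho_r]_q=[n]_q$; equivalently, it is the telescoping sum $\sum_{l=0}^{r}(q^{\rho_0+\cdots+\rho_{l-1}}-q^{\rho_0+\cdots+\rho_l})/(1-q)$, where the $l=0$ term uses the empty exponent $0$.

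Since $\rho_0=k$ is the length of the last run of $\tau$, this yields the first equality of the corollary, $t^{\maj(\tau)}\tfrac{[n]_q}{[k]_q}\prod_i [w_i]_q$. The second equality is then just Theorem~\ref{fermthm} applied in reverse to recognize $t^{\maj(\tau)}\prod_i [w_i]_q$ as the parking function sum over the same diagonal word. There is no real obstacle here: the only thing to check carefully is the bookkeeping in the $q$-identity and the fact that each $l$ from $0$ to $r$ genuinely contributes (so that no boundary case is dropped), both of which are straightforward. Edge cases such as $\tau$ having a single run ($r=0$, $k=n$) reduce to Theorem~\ref{fermthm} directly, consistent with the formula.
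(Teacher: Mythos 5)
Your proposal is correct and follows essentially the same route as the paper: decompose the sum over preference functions by deviation $l$, evaluate each piece via Theorem \ref{newschedthm} (with the $l=0$ piece given by Theorem \ref{fermthm}), and collapse the result using the telescoping identity $\sum_{l=0}^{r} q^{\rho_0+\cdots+\rho_{l-1}}[\rho_l]_q = [n]_q$. The only cosmetic difference is that the paper folds the $l=0$ term into the same $\frac{q^{\rho_0+\cdots+\rho_{l-1}}[\rho_l]_q}{[\rho_0]_q}$ expression rather than treating it separately, which changes nothing substantive.
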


\begin{proof}
We simply note that if $\tau$'s runs are given by $\rho_r, \dots, \rho_1, \rho_0$ (so that $\rho_0 + \dots + \rho_r=n$ and $\rho_0=k$), then
\begin{align*}
\sum_{\diagword(Pr)=\tau} t^{\area(Pr)} q^{\dinv(Pr)} &= \sum_{l=0}^r \left( \sum_{\substack{\diagword(Pr)=\tau \\ l(Pr)=l}} t^{\area(Pr)} q^{\dinv(Pr)} \right) \\
&= t^{\maj(\tau)} \frac{1}{[\rho_0]_q} \left( \sum_{l=0}^{r} q^{\rho_0 + \dots + \rho_{l-1} } [\rho_l]_q \right)  \prod_{i=1}^n [w_i]_q \\
&= t^{\maj(\tau)} \frac{[n]_q}{[k]_q} \prod_{i=1}^n [w_i]_q.
\end{align*}
This gives the first equality. To obtain the second, apply Theorem \ref{fermthm}.
\end{proof}


\pagebreak

\section{Dealing with Inverse Descents} \label{sec:ides}

In order to address the Square Paths Conjecture, we need to enumerate preference functions by $\area$, $\dinv$ \emph{and} $\ides$. In her thesis, Hicks \citep{Athesis} shows that the $\ides$ ``factors out'' of the desired enumeration for parking functions. We follow her notation here and prove the corresponding result for preference functions.

For any permutation $\tau$, we can partition the set $\{1,2,\dots,n\}$ according to whether $i$ appears directly left of $i+1$ in $\tau$. Call each such part a \emph{consecutive block} of $\tau$. E.g., the consecutive blocks of $\tau = 8 \, 9 \, 5 \, 4 \, 6 \, 7 \, 1 \, 2 \, 3$ are $\{8,9\}$, $\{5\}$, $\{4\}$, $\{6,7\}$, $\{1,2,3\}$. Let $\Yconsec(\tau)$ be the Young subgroup of $S_n$ which permutes elements in the same consecutive block of $\tau$. In the example, $\Yconsec(\tau) = S_{\{1,2,3\}} \times S_{\{4\}} \times S_{\{5\}} \times S_{\{6,7\}} \times S_{\{8,9\}}$.

\begin{lemma} \label{factorlemma}
Let $l \geq 0$. Suppose $\tau \in S_n$ has at least $l+1$ runs. Then
\begin{align*}
\sum_{\substack{Pr \in {\cal P}ref_n \\ \diagword(Pr)=\tau \\ l(Pr)=l}} &t^{\area(Pr)} q^{\dinv(Pr)} Q_{\ides(Pr)}\\
&= \left(\sum_{\substack{Pr \in {\cal P}ref_n \\ \diagword(Pr)=\tau \\ l(Pr)=l}} t^{\area(Pr)} q^{\dinv(Pr)}\right) \cdot \left( \frac{ \displaystyle \sum_{\pi \in \Yconsec(\tau)} q^{\inv(\pi)} Q_{\ides(\tau) \cup \ides(\pi)} }{ \displaystyle \sum_{\pi \in \Yconsec(\tau)} q^{\inv(\pi)} } \right).
\end{align*}
\end{lemma}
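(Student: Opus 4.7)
The plan is to follow Hicks' parking-function strategy and extend it to preference functions by constructing a projection $Pr \mapsto \pi(Pr) \in \Yconsec(\tau)$ under which both the $\ides$ and the $\dinv$ statistics decompose. The lemma will then follow by summing the $Q$-contributions over the fibers of this projection.

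First, I observe that $i$ and $i+1$ lie in the same consecutive block of $\tau$ if and only if they lie in the same diagonal of $Pr$: same diagonal forces them into a common (increasing) run of $\tau$ and hence into adjacent positions since no integer sits strictly between them, while different diagonals rules out adjacency in $\tau$. For each consecutive block $B$ of $\tau$, the cars of $B$ thus share one diagonal of $Pr$, and I define $\pi(Pr) \in \Yconsec(\tau)$ by recording, for each block $B$, the order in which its cars appear in their common diagonal read from right to left. A short case analysis on whether $i, i+1$ are in the same block—using that $\sigma(Pr)$ reads each diagonal right-to-left—gives $\ides(Pr) = \ides(\tau) \cup \ides(\pi(Pr))$ (the union is automatically disjoint, since $\pi$ fixes the relative order of inter-block pairs and $\tau$ places each within-block pair in increasing order).

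Second, I aim to show $\dinv(Pr) = D(Pr) + \inv(\pi(Pr))$ where $D$ depends on $Pr$ only through the ``coarse'' data: the set of columns that each block occupies in its diagonal, not the bijection of that block onto its column set. Tertiary dinv is coarse because it depends only on diagonals. Secondary dinv is coarse because it depends only on diagonals and column positions. Primary dinv involving two cars of different blocks in the same diagonal is coarse by the key observation that any two disjoint consecutive blocks $B, B'$ satisfy either $\max B < \min B'$ or $\max B' < \min B$ (since each block is a maximal run of consecutive integers); so the number of primary-dinv pairs between $B$ and any fixed car outside $B$ depends only on the column multisets. Finally, the primary dinv within a single block $B$ (which is the only contribution sensitive to the within-block assignment) equals the inversions of $\pi(Pr)|_B$. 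Summing over blocks yields the claimed decomposition.

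Consequently, for each $\pi \in \Yconsec(\tau)$,
\begin{align*}
\sum_{\substack{Pr:\, \diagword(Pr)=\tau,\\ l(Pr)=l,\; \pi(Pr)=\pi}} q^{\dinv(Pr)} = q^{\inv(\pi)} \cdot C, \qquad C := \sum_{\text{coarse data}} q^{D},
\end{align*}
with $C$ independent of $\pi$. Combining with the $\ides$ factorization and pulling out $t^{\area(Pr)} = t^{\maj(\tau)}$ gives
\begin{align*}
\sum_{\substack{Pr\\ \diagword(Pr)=\tau,\, l(Pr)=l}} t^{\area(Pr)} q^{\dinv(Pr)} Q_{\ides(Pr)} = t^{\maj(\tau)} C \sum_{\pi \in \Yconsec(\tau)} q^{\inv(\pi)} Q_{\ides(\tau) \cup \ides(\pi)},
\end{align*}
while the companion sum without $Q_{\ides}$ equals $t^{\maj(\tau)} C \sum_{\pi} q^{\inv(\pi)}$; dividing yields the lemma. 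The main obstacle is the coarse/fine dichotomy established in step two, especially verifying that cross-block primary dinv is coarse—this is where the ``maximal run of consecutive integers'' property of consecutive blocks is used in an essential way, and it is the ingredient that has no analogue for arbitrary subsets of cars.
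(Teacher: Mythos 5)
Your proposal is correct and follows essentially the same route as the paper: your projection onto $\Yconsec(\tau)$ together with the ``coarse data'' of each fiber is exactly the paper's unique decomposition of $Pr$ into a canonical representative $Pr' \in {\cal P}ref_{\tau,l}^{id}$ and a permutation $\pi \in \Yconsec(\tau)$, with the same splittings $\dinv(Pr) = \dinv(Pr') + \inv(\pi)$ and $\ides(Pr) = \ides(\tau) \cup \ides(\pi)$ driving the factorization. If anything, your verification that cross-block primary $\dinv$ is insensitive to the within-block arrangement (via the interval property of consecutive blocks) makes explicit a point the paper's sketch leaves implicit.
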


\noindent The case $l=0$ of this lemma is equivalent to Corollary 74 of \cite{Athesis}. Its proof extends without issue to this more general setting. However, for the sake of completeness, we provide a sketch of this proof below.

\begin{proof}[Proof Sketch]

Let ${\cal P}ref_{\tau,l}$ be the set of preference functions with diagonal word $\tau$ and deviation $l$. Note that $\ides(\tau) \subseteq \ides(Pr)$. This is because $i \in \ides(\tau)$ iff $i+1$ occurs in a higher diagonal of $Pr$ than $i$, which means that $i+1$ will precede $i$ in $\sigma(Pr)$. Any other element of $\ides(Pr)$ corresponds to some $i$ and $i+1$ in the same consecutive block of $\tau$. Hence, each $Pr \in {\cal P}ref_{\tau,l}$ can be uniquely decomposed into a pair consisting of another preference function $Pr' \in {\cal P}ref_{\tau,l}$ with $\ides(Pr')=\ides(\tau)$ and a permutation $\pi \in \Yconsec(\tau)$ so that if we permute the cars of $Pr'$ according to $\pi$, we obtain $Pr$.

\begin{figure}[H]
\begin{center}
\hspace{1in} \includegraphics[width=4in]{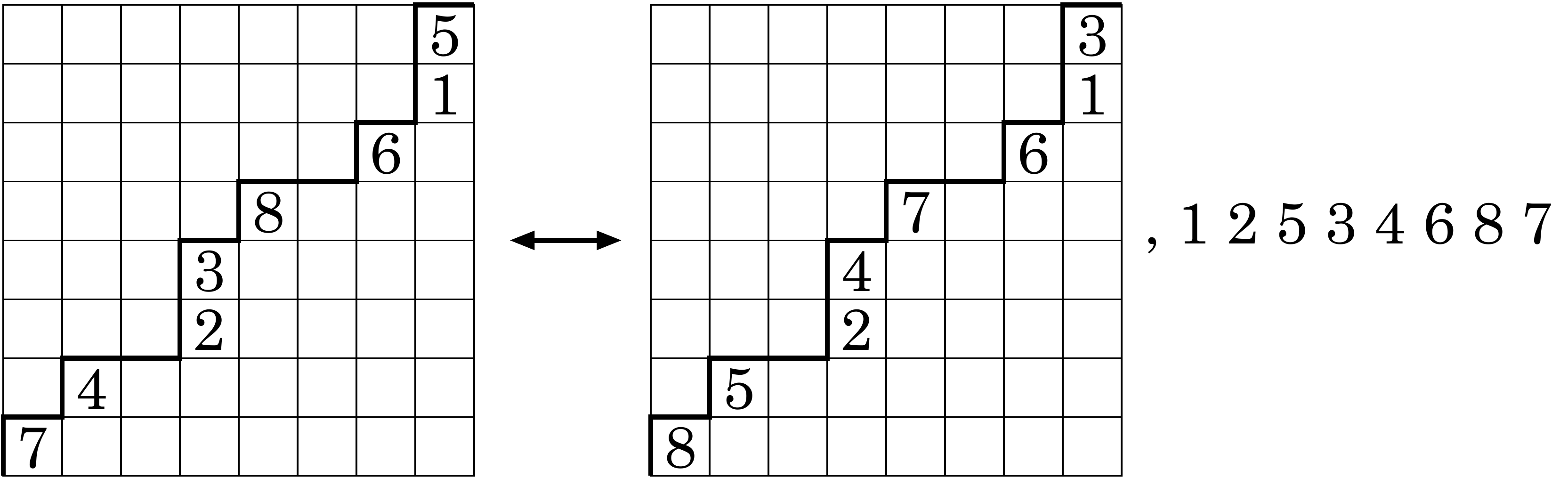}
\caption{A decomposition of a preference function according to consecutive blocks.}
\label{fig:Yconsec}
\end{center}
\end{figure}
\vspace{-.5cm}

For example, consider Figure \ref{fig:Yconsec}. On the left side of the figure, we have a preference function $Pr$ with diagonal word $\tau = 3 \, 4 \, 5 \, 7 \, 8 \, 1 \, 2 \, 6$ and deviation $l=1$. Furthermore $\ides(Pr) = \{2,4,6,7\}$ and $\ides(\tau) = \{2,6\}$. On the right we have a preference function $Pr'$ with $\ides(Pr')=\ides(\tau)$ and a permutation $\pi$ consisting of a cycle on $\{3,4,5\}$ and a transposition on $\{7,8\}$. The consecutive blocks of $\tau$ are $\{1,2\}, \{3,4,5\}, \{6\}, \{7,8\}$, so $\pi \in Yconsec(\tau)$.

In general, we have that $\ides(Pr) = \ides(\tau) \cup \ides(\pi)$ and $\dinv(Pr) = \dinv(Pr') + \inv(\pi)$. Note that $Pr$ and $Pr'$ have identical $\dinv$ pairs and cars below the diagonal with one exception. $Pr$ contains primary $\dinv$ between consecutive cars and $Pr'$ does not. But $\pi$ encodes the way that consecutive cars within a diagonal (within a single consecutive block of $\tau$) are interleaved and hence how many primary $\dinv$s occur between them. Similarly, $Pr$ and $Pr'$ share $\ides$ except those caused by pairs $i$ and $i+1$ in the same diagonal, which are recorded by $\pi$.

Let ${\cal P}ref_{\tau,l}^{id}$ be the set of preference functions $Pr \in {\cal P}ref_{\tau,l}$ which corresponds to itself and the identity permutation under this decomposition. Then we have
$$
\sum_{Pr \in {\cal P}ref_{\tau,l}} t^{\area(Pr)} q^{\dinv(Pr)}
= \left( \sum_{Pr' \in {\cal P}ref_{\tau,l}^{id}} t^{\area(Pr')} q^{\dinv(Pr')} \right) \cdot
\left( \sum_{\pi \in \Yconsec(\tau)} q^{\inv(\pi)} \right)
$$
and
\begin{align*}
\sum_{Pr \in {\cal P}ref_{\tau,l}} &t^{\area(Pr)} q^{\dinv(Pr)} Q_{\ides(Pr)}\\
&= \left( \sum_{Pr' \in {\cal P}ref_{\tau,l}^{id}} t^{\area(Pr')} q^{\dinv(Pr')} \right) \cdot
\left( \sum_{\pi \in \Yconsec(\tau)} q^{\inv(\pi)} Q_{\ides(\tau) \cup \ides(\pi)} \right).
\end{align*}
\noindent Combining these equations gives the desired result.
\end{proof}

Fixing $\tau$, if we sum Lemma \ref{factorlemma} over $l$ and compare with the case $l=0$, we see that the $\ides$-less enumerations of preference functions and parking functions differ from the full enumeration by the same factor. This fact, combined with Corollary \ref{noides} gives the following.
\begin{corollary} \label{withides}
Let $\tau \in S_n$ and let $k$ be the length of its last run. Then
$$
\sum_{\substack{Pr \in {\cal P}ref_n \\ \diagword(Pr)=\tau}} t^{\area(Pr)} q^{\dinv(Pr)} Q_{\ides(Pr)}
= \frac{[n]_q}{[k]_q} \sum_{\substack{PF \in {\cal PF}_n \\ \diagword(PF)=\tau}} t^{\area(PF)} q^{\dinv(PF)} Q_{\ides(PF)}.
$$
\end{corollary}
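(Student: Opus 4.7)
The plan is to combine Lemma \ref{factorlemma} with Corollary \ref{noides} by exploiting the fact that the second factor on the right-hand side of Lemma \ref{factorlemma}, namely
$$
R(\tau) := \frac{\displaystyle\sum_{\pi \in \Yconsec(\tau)} q^{\inv(\pi)} Q_{\ides(\tau) \cup \ides(\pi)}}{\displaystyle\sum_{\pi \in \Yconsec(\tau)} q^{\inv(\pi)}},
$$
depends on $\tau$ alone and not on the deviation $l$. This is the crucial observation, since both $\Yconsec(\tau)$ and $\ides(\tau)$ are functions of $\tau$ only.

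The first step is to sum Lemma \ref{factorlemma} over all admissible deviations $l$ (from $0$ up to one less than the number of runs of $\tau$). Because $R(\tau)$ is independent of $l$, it factors out of every summand, yielding
$$
\sum_{\substack{Pr \in {\cal P}ref_n \\ \diagword(Pr)=\tau}} t^{\area(Pr)} q^{\dinv(Pr)} Q_{\ides(Pr)} = \left(\sum_{\substack{Pr \in {\cal P}ref_n \\ \diagword(Pr)=\tau}} t^{\area(Pr)} q^{\dinv(Pr)}\right) R(\tau).
$$
The second step is to specialize Lemma \ref{factorlemma} to $l=0$. Since the preference functions with deviation $0$ are precisely the parking functions, this gives
$$
\sum_{\substack{PF \in {\cal PF}_n \\ \diagword(PF)=\tau}} t^{\area(PF)} q^{\dinv(PF)} Q_{\ides(PF)} = \left(\sum_{\substack{PF \in {\cal PF}_n \\ \diagword(PF)=\tau}} t^{\area(PF)} q^{\dinv(PF)}\right) R(\tau).
$$

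Dividing the two displayed identities cancels $R(\tau)$ and reduces the desired claim to the identity
$$
\frac{\sum_{\diagword(Pr)=\tau} t^{\area(Pr)} q^{\dinv(Pr)}}{\sum_{\diagword(PF)=\tau} t^{\area(PF)} q^{\dinv(PF)}} = \frac{[n]_q}{[k]_q},
$$
which is exactly the content of Corollary \ref{noides}. Cross-multiplying delivers the corollary. Since the combinatorial heavy lifting has already been carried out in Theorem \ref{newschedthm} (through Corollary \ref{noides}) and in Lemma \ref{factorlemma}, no real obstacle remains; the entire argument hinges on the one-line observation that $R(\tau)$ does not see $l$, which is precisely what enables the sum-then-divide maneuver.
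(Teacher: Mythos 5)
Your proposal is correct and follows essentially the same route as the paper: sum Lemma \ref{factorlemma} over $l$, observe that the quasi-symmetric factor depends only on $\tau$ and so is the same in the $l=0$ (parking function) case, and then invoke Corollary \ref{noides} to identify the ratio of the $\ides$-free enumerations as $[n]_q/[k]_q$. The paper compresses this into one sentence before the corollary, but the logical content is identical to your sum-then-divide argument.
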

\noindent Now we can relate the right hand side of this equation to $\nabla$ using a corollary of the Compositional Shuffle Conjecture. More precisely, in \citep{compconj}, Haglund, Morse and Zabrocki refined the Shuffle Conjecture using the following plethystic symmetric function operators.
$$
C_a \, P[X] \, = \, \left(- \frac{1}{q} \right)^{a-1} P \hspace{-.1cm} \left[ X - \frac{1-1/q}{z} \right] \, \sum_{m \geq 0} z^m h_m[X] \, \Big|_{z^a}
$$
Their conjecture, which is stated below, was recently proved by Carlsson and Mellit in \citep{compproof}. Here $\comp(PF)$ is the composition of $n$ giving the distances between points $(i,i)$ on $PF$'s underlying path. For example, the parking function in Figure \ref{fig:pref} has $\comp = (4,1)$. 
\begin{theorem}[Carlsson-Mellit]  \label{thm:comp}
For all compositions $\rho \models n$,
$$
C_{\rho_1} \cdots C_{\rho_k} 1 = \sum_{\substack{ PF \in {\cal PF}_n \\ \comp(PF)=\rho}} t^{\area(PF)} q^{\dinv(PF)} Q_{\ides(PF)}.
$$
\end{theorem}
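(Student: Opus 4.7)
Theorem \ref{thm:comp} is the Compositional Shuffle Theorem of Carlsson and Mellit, whose proof is a major technical achievement. Rather than attempting a from-scratch proof, the plan is to sketch the Dyck path algebra strategy by which it was originally established. First I would introduce a graded module $V_*$ obtained by adjoining to the ring of symmetric functions $\Lambda_{q,t}$ an auxiliary alphabet $y_1, y_2, \ldots$, where the grading records the number of $y$'s in use. On $V_*$ I would define raising and lowering operators $d_+$ and $d_-$ that attach, respectively, a North and an East step at the bottom of a Dyck path (tracked via a new $y$-variable when a car is introduced), together with a family of Hecke-type operators $T_i$ that braid adjacent $y$'s.

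The central algebraic step is to establish a ``zig-zag'' commutation relation between $d_+$ and $d_-$ modulo the $T_i$, the critical instance being (after suitable plethystic normalization) of the form $d_- d_+ - q\, d_+ d_- = (\text{term involving the } T_i)$. With this in hand, I would show by a direct plethystic calculation that the operator $C_a$ on $V_*$ can be written as an explicit word in the $d_\pm$ and $T_i$ applied to elements of fixed $y$-degree. Combining the two facts allows one to rewrite $C_{\rho_1}\cdots C_{\rho_k}\,1$ and then to invoke a combinatorial interpretation of the action on a basis of $V_*$ indexed by partial parking functions. Under this dictionary the power of $q$ records $\dinv$, the power of $t$ records $\area$, and a standardization argument on the $y$-alphabet produces the quasi-symmetric factor $Q_{\ides(PF)}$; the structure of the word in the $C_a$'s controls the composition $\comp(PF)=\rho$ via the positions where $d_-$ steps return to the diagonal.

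Finally, I would conclude by induction on $n=|\rho|$, with base case $C_\varnothing\,1 = 1$ being trivial, matching the action of a single $C_a$ on both sides at each step. The main obstacle is establishing the zig-zag commutation relation in $V_*$: algebraically it requires delicate plethystic bookkeeping around the kernel $\sum_{m\geq 0} z^m h_m[X]$ appearing in the definition of $C_a$, and combinatorially it encodes the intricate way $\dinv$ transforms when an initial row of labels is attached at the bottom of a Dyck path. Once that identity is in hand, the remainder of the argument is a largely formal double induction on $n$ and on the length of the composition, with the $T_i$-braiding accounting precisely for the primary $\dinv$ interactions within a single diagonal in the same way that $\Yconsec(\tau)$ did in the proof of Lemma \ref{factorlemma}.
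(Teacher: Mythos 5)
The paper does not prove Theorem \ref{thm:comp} at all: it imports it wholesale as the main result of Carlsson and Mellit \citep{compproof}, and the only use made of it here is to sum over compositions (via the Haglund--Morse--Zabrocki expansion of $E_{n,k}$) to obtain Corollary \ref{thm:touch}. So there is no internal proof to compare against, and simply stating the theorem with a citation --- which is essentially what you do --- is exactly the paper's own treatment.

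Judged as a standalone proof, however, your outline has a gap the size of the Carlsson--Mellit paper itself: every load-bearing step is asserted rather than established. The construction of the graded modules and the verification that $d_+$, $d_-$, and the $T_i$ actually satisfy the claimed relations; the commutation relation (your ``zig-zag'' identity --- note the actual relations of the Dyck path algebra in \citep{compproof} are more involved than a single $q$-twisted commutator and are the technical heart of that paper); the expression of $C_a$ as a word in the generators; and the dictionary matching the algebra action to $(\area,\dinv,\ides)$-weighted partial parking functions with touch points recording $\comp(PF)=\rho$ --- none of these is proved, and you yourself label the central relation ``the main obstacle.'' To your credit, the sketch is a faithful pr\'ecis of the Carlsson--Mellit strategy, and you correctly identify why the induction must pass through elements of positive $y$-degree (the $C_a$'s alone do not close under the recursion, which is precisely why the auxiliary alphabet is needed); the analogy between the $T_i$-braiding and the role of $\Yconsec(\tau)$ in Lemma \ref{factorlemma} is also apt. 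But a roadmap whose key lemma is flagged as unproved is not a proof; in the context of this paper the correct move is the paper's own, namely to cite \citep{compproof}.
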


Let $PF$ be a parking function. Define $\touch(PF)$ to be the number of parts of $\comp(PF)$, i.e., the number of cars in the main diagonal $y=x$. For $n \in \mathbb{N}$ and $1 \leq k \leq n$, Garsia and Haglund \citep{qtCat} define symmetric functions $E_{n,k}$ so that
$$
e_n \left[ X \frac{1-z}{1-q} \right] = \sum_{k=1}^n \frac{(z;q)_k}{(q;q)_k} E_{n,k}[X].
$$
where
$$
(z;q)_n = (1-z)(1-zq) \cdots (1-zq^{n-1}).
$$
Haglund, Morse and Zabrocki \citep{compconj} showed
\begin{theorem}[Haglund-Morse-Zabrocki]
For all $1 \leq k \leq n$,
$$
E_{n,k} = \sum_{\rho \models n, \, \, l(\rho)=k} C_{\rho_1} \cdots C_{\rho_k} 1.
$$
\end{theorem}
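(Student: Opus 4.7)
The plan is to identify both sides as coefficients of a common plethystic generating function. First, I would rewrite $C_a$ as a coefficient extraction from a single plethystic operator: defining
\[
\mathbf{C}(z)\, P[X] \,=\, P\!\left[X - \frac{1-1/q}{z}\right]\Omega[zX],
\]
where $\Omega[Y]=\sum_{m\geq 0} h_m[Y]$ is the plethystic exponential, one has $C_a = (-1/q)^{a-1}[z^a]\mathbf{C}(z)$. Introducing a marker variable $t$ for parts, summing the defining relation yields
\[
\sum_{\rho\,\models\, n,\; l(\rho)=k} C_{\rho_1}\cdots C_{\rho_k}\,1 \,=\, [t^n]\,(C(t))^k\,1, \qquad C(t) := \sum_{a\geq 1} t^a C_a,
\]
so it suffices to compute $(C(t))^k\,1$ in closed form and match it with the generating series defining $E_{n,k}$.

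Next, I would iterate $\mathbf{C}(z_1)\cdots \mathbf{C}(z_k)$ applied to $1$ directly. Each $\mathbf{C}(z_i)$ contributes an $\Omega[z_iX]$ factor and shifts $X \mapsto X - (1-1/q)/z_i$ in everything to its right; reading right-to-left, the accumulated shifts propagate through the $\Omega[z_jX]$ factors with $j>i$ and, using $\Omega[A+B]=\Omega[A]\Omega[B]$, collapse into a single plethystic exponential on $X\sum_i z_i$ together with rational correction factors of the form $\Omega[-(1-1/q)z_j/z_i]$ for $i<j$. Substituting $z_i = -t_i/q$ absorbs the $(-1/q)^{a-1}$ prefactors in each $C_{\rho_i}$, and specializing $t_i=t$ collects all compositions of length $k$ with the correct overall sign.

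The final step is to compare with $E_{n,k}[X]$ itself, obtained either by inverting the upper-triangular relation $e_n[X(1-z)/(1-q)] = \sum_k \frac{(z;q)_k}{(q;q)_k} E_{n,k}[X]$ or, more directly, by expanding $e_n[X(1-z)/(1-q)]$ via the dual Cauchy identity into a plethystic product of $\Omega$ factors. The main obstacle will be the bookkeeping of the nested plethystic shifts and the rational correction factors $\Omega[-(1-1/q)z_j/z_i]$ produced in the iteration: these must be shown to simplify into exactly the $q$-Pochhammer factor $(q;q)_k$ appearing in the denominator of the defining expansion of $E_{n,k}$. This collapse is the heart of the argument and is precisely where the specific choice of normalization $(-1/q)^{a-1}$ in the $C_a$ operators pays off.
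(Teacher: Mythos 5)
The paper does not actually prove this statement---it quotes it from Haglund--Morse--Zabrocki \citep{compconj}---so your proposal is judged on its own merits. Its first stages are sound: $C_a = (-1/q)^{a-1}[z^a]\,\mathbf{C}(z)$ is just the definition, and the iteration does collapse as you say, giving, in the expansion region $|z_1| \gg |z_2| \gg \cdots \gg |z_k|$,
$$
\mathbf{C}(z_1)\cdots\mathbf{C}(z_k)\,1 \;=\; \prod_{1 \leq i < j \leq k} \frac{1-z_j/z_i}{1-z_j/(q z_i)}\;\prod_{i=1}^{k} \Omega[z_i X],
$$
since $\Omega[-(1-1/q)u] = (1-u)/(1-u/q)$ for a monomial $u$. (A sanity check at $n=k=2$: this yields $C_1 C_1 1 = h_1^2 - (1-1/q)h_2 = e_2 + h_2/q$ and $C_2 1 = -h_2/q$, which matches $E_{2,2}$ and $E_{2,1}$ as computed from the defining expansion, with $E_{2,1}+E_{2,2}=e_2$.)

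The genuine gap is the step ``specializing $t_i = t$ collects all compositions of length $k$.'' The correction factors depend only on the ratios $z_j/z_i$, so they have total degree zero and are invisible to a single grading variable; the diagonal specialization is simply not a legitimate operation here. Concretely, as a rational function each factor $\frac{1-z_j/z_i}{1-z_j/(qz_i)}$ evaluates to $\frac{1-1}{1-1/q} = 0$ at $z_i = z_j$, which would absurdly give $(C(t))^k 1 = 0$ for $k \geq 2$, while as a formal series in $z_j/z_i$ the substitution dumps infinitely many constant terms into each coefficient of $t^n$ and diverges. The structural reason is that $[t^n]F(t,\dots,t)$ aggregates coefficients $[z_1^{\rho_1}\cdots z_k^{\rho_k}]$ over all integer vectors of total degree $n$, including those with $\rho_i \leq 0$, whereas your target sum needs exactly the coefficients with every $\rho_i \geq 1$. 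What is actually required is an iterated residue/constant-term computation in the cone $|z_1| \gg \cdots \gg |z_k|$: summing over $\rho_k \geq 1$ is indeed a substitution $z_k = t$ (minus the $\rho_k = 0$ term), but the substituted factors $\frac{1-t/z_i}{1-t/(qz_i)}$ then carry negative powers of the remaining variables, so every subsequent summation is a true residue extraction, not a substitution. Carrying this out---this is exactly where the $(z;q)_k/(q;q)_k$ structure emerges, via constant-term machinery in the style of Garsia, Haglund, and Xin, or alternatively via the inductive/recursive route that Haglund--Morse--Zabrocki themselves take---is the entire content of the theorem; your plan defers it to ``bookkeeping,'' and the one concrete mechanism you propose for it fails as stated.
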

\noindent Hence Theorem \ref{thm:comp} implies
\begin{corollary} \label{thm:touch}
For all $n \in \mathbb{N}$ and all $1 \leq k \leq n$,
$$
\nabla E_{n,k} = \sum_{\substack{PF \in {\cal PF}_n \\ \touch(PF)=k}} t^{\area(PF)} q^{\dinv(PF)} Q_{\ides(PF)}.
$$
\end{corollary}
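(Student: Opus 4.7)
The plan is to deduce the corollary by chaining together the two theorems immediately preceding it. First, I apply $\nabla$ (which is linear) to both sides of the Haglund-Morse-Zabrocki identity
$$E_{n,k} = \sum_{\substack{\rho \models n \\ l(\rho) = k}} C_{\rho_1} \cdots C_{\rho_k} 1,$$
moving $\nabla$ inside the sum. Then I apply Theorem \ref{thm:comp} (the Compositional Shuffle Theorem) termwise, so that each summand becomes the weighted enumeration of parking functions $PF \in {\cal PF}_n$ with $\comp(PF) = \rho$.

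The final step is to collapse the resulting double sum into a single sum. The key observation is the tautology $\touch(PF) = l(\comp(PF))$: by the definition of $\comp$, the parts of $\comp(PF)$ record the distances between successive diagonal touches of the underlying path, so their number is exactly the number of cars on the main diagonal. Hence summing first over compositions $\rho \models n$ with $l(\rho) = k$ and then over parking functions with $\comp(PF) = \rho$ is the same as summing once over all $PF \in {\cal PF}_n$ with $\touch(PF) = k$. Replacing the double sum by this single sum yields the claimed formula.

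There is no genuine technical obstacle here; the corollary is essentially a substitution followed by a trivial reindexing, with all the substantive content delegated to the two cited input theorems. The only point that needs a line of justification is the identity $\touch = l \circ \comp$, and that follows directly from how $\comp(PF)$ is defined in terms of the diagonal touches of $PF$.
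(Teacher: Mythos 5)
Your proposal is correct and coincides with the paper's own (tersely indicated) derivation: the corollary is obtained precisely by applying $\nabla$ to the Haglund--Morse--Zabrocki expansion of $E_{n,k}$, invoking Theorem \ref{thm:comp} termwise, and collapsing the double sum via the tautology $\touch(PF) = l(\comp(PF))$. One small point worth flagging: Theorem \ref{thm:comp} as printed omits the $\nabla$ on its left-hand side (evidently a typo, since the actual Carlsson--Mellit theorem reads $\nabla C_{\rho_1} \cdots C_{\rho_k} 1 = \sum_{\comp(PF)=\rho} t^{\area(PF)} q^{\dinv(PF)} Q_{\ides(PF)}$), and your termwise application implicitly --- and correctly --- uses this $\nabla$-version, which is exactly what the paper's deduction requires.
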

\noindent Then summing Corollary \ref{withides} over all $\tau$ whose last run has length $k$ and applying Corollary \ref{thm:touch} gives
\begin{theorem} \label{thm:touchpref}
For all $1 \leq k \leq n$,
$$
\sum_{\substack{Pr \in {\cal P}ref_n \\ \touch(Pr)=k}} t^{\area(Pr)} q^{\dinv(Pr)} Q_{\ides(Pr)} 
= \frac{[n]_q}{[k]_q} \nabla E_{n,k}
$$
where $\touch(Pr)$ is the number of cars in diagonal $-l(Pr)$ for any preference function $Pr$. (It is also the length of the last run of $\diagword(Pr)$.)
\end{theorem}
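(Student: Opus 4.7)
The plan is to aggregate the identity in Corollary \ref{withides} over all permutations $\tau \in S_n$ whose last run has length exactly $k$. The key bookkeeping observation, recorded in the theorem statement itself, is that $\touch(Pr)$ equals the length of the last run of $\diagword(Pr)$: by definition, the last run of the diagonal word records the cars in the lowest diagonal of $Pr$, which is diagonal $-l(Pr)$, and this is precisely what $\touch(Pr)$ counts. The same equality holds for parking functions, where $l = 0$ and the lowest nonempty diagonal is diagonal $0$.

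Granting this, I would fix $1 \leq k \leq n$ and sum Corollary \ref{withides} over all $\tau \in S_n$ whose last run has length $k$. The prefactor $[n]_q / [k]_q$ depends only on $n$ and on the common last-run length $k$, so it pulls outside of the sum over $\tau$. The preference side then collects to
$$
\sum_{\substack{Pr \in {\cal P}ref_n \\ \touch(Pr) = k}} t^{\area(Pr)} q^{\dinv(Pr)} Q_{\ides(Pr)},
$$
while the parking side collects to
$$
\frac{[n]_q}{[k]_q} \sum_{\substack{PF \in {\cal PF}_n \\ \touch(PF) = k}} t^{\area(PF)} q^{\dinv(PF)} Q_{\ides(PF)}.
$$
The last step is to invoke Corollary \ref{thm:touch}, which identifies the inner sum on the right with $\nabla E_{n,k}$, delivering the claimed identity.

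There is essentially no obstacle at this stage. All the substantive combinatorial content has been carried out upstream: Theorem \ref{newschedthm} controls how $\area$ and $\dinv$ distribute across preference functions with fixed $\diagword$ and deviation; Lemma \ref{factorlemma} shows that the $\ides$ statistic factors uniformly through the consecutive-block decomposition; Corollary \ref{withides} packages both ingredients into a clean $[n]_q/[k]_q$ proportionality with the corresponding parking-function generating function; and the Carlsson--Mellit theorem supplies, via Corollary \ref{thm:touch}, the interpretation of $\nabla E_{n,k}$ as a parking-function sum over a fixed $\touch$ value. The only point that warrants careful verification is the $\touch$-versus-last-run-length equivalence, since without it the partitioning of $\tau$'s by last-run length would not line up with the partitioning of preference (and parking) functions by $\touch$ required on the two sides.
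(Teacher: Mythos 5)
Your proposal is correct and matches the paper's own argument exactly: the paper proves Theorem \ref{thm:touchpref} precisely by summing Corollary \ref{withides} over all $\tau$ whose last run has length $k$ and then applying Corollary \ref{thm:touch}, with the $\touch$-equals-last-run-length identification stated parenthetically in the theorem itself. Your added verification of that identification is a reasonable bit of diligence but introduces nothing beyond the paper's route.
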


Finally, we need a symmetric function identity relating $p_n$ to the polynomials $\{ E_{n,k} \}$. The following identity was proved by Can and Loehr \citep{sqsignproof} in their proof of a special case of the Square Paths Conjecture. It seems this was known earlier to Garsia and Haglund \citep{qtCat}.
\begin{theorem}[Garsia-Haglund] \label{Esymmid}
For all $n \geq 1$,
$$
(-1)^{n-1} p_n = \sum_{k=1}^n \frac{[n]_q}{[k]_q} E_{n,k}.
$$
\end{theorem}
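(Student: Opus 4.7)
The plan is to derive the identity by differentiating the defining relation for $E_{n,k}$ with respect to $z$ and evaluating at $z=1$. Starting from
$$e_n\!\left[X\frac{1-z}{1-q}\right] = \sum_{k=1}^n \frac{(z;q)_k}{(q;q)_k} E_{n,k}[X],$$
the key idea is that this single operation produces the weights $1/(1-q^k)$ on the right-hand side while collapsing the entire left-hand side to a multiple of $p_n[X]$.

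On the right, I would compute $\frac{d}{dz}(z;q)_k\big|_{z=1}$ directly from $(z;q)_k = (1-z)(1-zq)\cdots(1-zq^{k-1})$. In the product rule expansion, every term other than the one obtained by differentiating the leading $(1-z)$ still carries an unevaluated $(1-z)$ factor, so only that term survives at $z=1$, giving $-(q;q)_{k-1}$. Dividing by $(q;q)_k$,
$$\frac{d}{dz}\frac{(z;q)_k}{(q;q)_k}\bigg|_{z=1} = -\frac{1}{1-q^k},$$
so the right-hand derivative at $z=1$ equals $-\sum_{k=1}^n E_{n,k}[X]/(1-q^k)$.

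For the left I would expand $e_n = \sum_{\lambda \vdash n} \epsilon_\lambda z_\lambda^{-1} p_\lambda$ with $\epsilon_\lambda = (-1)^{n-l(\lambda)}$ and apply the standard plethystic identity $p_\lambda[X(1-z)/(1-q)] = p_\lambda[X]\prod_i (1-z^{\lambda_i})/(1-q^{\lambda_i})$. When the product in $z$ is differentiated and evaluated at $z=1$, each summand still carries an unevaluated factor $(1-z^{\lambda_j})|_{z=1}=0$ unless $\lambda$ has a single part. Thus only the $\lambda=(n)$ term survives, and a one-line calculation yields
$$\frac{d}{dz}\, e_n\!\left[X\frac{1-z}{1-q}\right]\bigg|_{z=1} = \frac{(-1)^{n-1}}{n}\cdot\frac{-n}{1-q^n}\, p_n[X] = \frac{(-1)^{n}}{1-q^n}\, p_n[X].$$

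Equating the two derivatives and multiplying through by $(1-q^n)$, using $(1-q^n)/(1-q^k) = [n]_q/[k]_q$, gives exactly the claim. The main obstacle, such as it is, is conceptual rather than computational: recognizing that $z=1$ is the correct point of evaluation, because it is precisely where every factor $(1-z^{\lambda_i})$ in the plethystic product vanishes. This single observation is what simultaneously collapses the full power-sum expansion of $e_n$ down to its $p_n$-component and produces the correct alternating sign $(-1)^{n-1}$, so once it is in place the rest of the argument reduces to bookkeeping.
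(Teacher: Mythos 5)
Your proof is correct, and it is worth noting that the paper itself offers no proof of this identity at all: it is stated with attribution to Garsia--Haglund \citep{qtCat} and cited as proved by Can and Loehr \citep{sqsignproof}, so your self-contained derivation fills in what the paper delegates to the literature. Every step checks out: $\frac{d}{dz}(z;q)_k\big|_{z=1} = -(q;q)_{k-1}$ since every other product-rule term retains the vanishing factor $(1-z)$, giving $-1/(1-q^k)$ after dividing by $(q;q)_k$; on the left, the power-sum expansion of $e_n$ together with $p_m\left[X\frac{1-z}{1-q}\right] = p_m[X]\frac{1-z^m}{1-q^m}$ shows that the $z$-derivative at $z=1$ kills every $\lambda$ with $\ell(\lambda)\geq 2$, isolating $\frac{(-1)^{n}}{1-q^n}p_n[X]$ from the $\lambda=(n)$ term (with $z_{(n)}=n$ and $\epsilon_{(n)}=(-1)^{n-1}$ as you use). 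Both sides of the defining relation are polynomials in $z$, so the differentiation is legitimate, and the final bookkeeping is right up to the harmless observation that you multiply through by $-(1-q^n)$ rather than $(1-q^n)$ to land on $(-1)^{n-1}p_n = \sum_{k=1}^n \frac{[n]_q}{[k]_q}E_{n,k}$. Your method is, moreover, essentially the argument in the cited sources: since both sides of the defining relation vanish at $z=1$ (the left side becomes $e_n[0]=0$, and $(1;q)_k=0$ for $k\geq 1$), comparing derivatives at $z=1$ is the same first-order extraction as the dividing-by-$(1-z)$-and-letting-$z\to 1$ computation, so you have independently reconstructed the standard proof rather than found a genuinely different route.
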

\noindent Hence summing Theorem \ref{thm:touchpref} over $k$ and applying Theorem \ref{Esymmid} gives the Square Paths Conjecture.
\begin{theorem}
For all $n \geq 1$,
$$
\sum_{Pr \in {\cal P}ref_n} t^{\area(Pr)} q^{\dinv(Pr)} Q_{\ides(Pr)} = (-1)^{n-1} \nabla p_n.
$$
\end{theorem}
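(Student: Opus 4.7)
The plan is to combine the two deepest results of the preceding sections — Theorem \ref{thm:touchpref}, which expresses the $\ides$-weighted enumeration of preference functions with a fixed $\touch$ value as a multiple of $\nabla E_{n,k}$, and Theorem \ref{Esymmid}, which decomposes $(-1)^{n-1} p_n$ in the basis $\{E_{n,k}\}$ with precisely the coefficients $[n]_q/[k]_q$. The symmetry between these two formulas is the miracle that makes the proof essentially immediate.

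First I would partition ${\cal P}ref_n$ according to the value of $\touch(Pr)$. Since every preference function on $n$ cars has at least one car in its lowest occupied diagonal, $\touch(Pr)$ takes values in $\{1,2,\dots,n\}$, and the sets $\{Pr : \touch(Pr)=k\}$ form a disjoint union whose total is ${\cal P}ref_n$. Then I would write
\begin{align*}
\sum_{Pr \in {\cal P}ref_n} t^{\area(Pr)} q^{\dinv(Pr)} Q_{\ides(Pr)}
&= \sum_{k=1}^n \sum_{\substack{Pr \in {\cal P}ref_n \\ \touch(Pr)=k}} t^{\area(Pr)} q^{\dinv(Pr)} Q_{\ides(Pr)}
\end{align*}
and apply Theorem \ref{thm:touchpref} term by term to convert the inner sum into $\frac{[n]_q}{[k]_q} \nabla E_{n,k}$.

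At this point, the expression has become $\sum_{k=1}^n \frac{[n]_q}{[k]_q} \nabla E_{n,k}$. Since $\nabla$ is a linear operator on symmetric functions, I would pull it outside the sum to obtain $\nabla \bigl( \sum_{k=1}^n \frac{[n]_q}{[k]_q} E_{n,k} \bigr)$. Then Theorem \ref{Esymmid} identifies the expression inside $\nabla$ as exactly $(-1)^{n-1} p_n$, giving the desired right-hand side.

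The main obstacle here is not in the final synthesis — that is essentially a one-line calculation — but rather ensuring that the infrastructure developed in Sections \ref{sec:sched}--\ref{sec:ides} really does yield Theorem \ref{thm:touchpref} cleanly; the substantive content lies in the schedule extension (Theorem \ref{newferm}), the partition lemma (Lemma \ref{parlem}) that controls how schedules change with deviation, and the consecutive-block factorization of $\ides$ (Lemma \ref{factorlemma}). Once those are in place, the only remaining subtlety to check carefully in the last step is that the $k$ in $\touch(Pr)=k$ on the preference-function side matches the $k$ in the length of the last run of $\diagword$ used to derive Corollary \ref{withides}, and that the empty-diagonal edge cases (cars strictly below $y=x$ contributing a shift to the diagonal containing ``$\touch$'') are handled by the definition $\touch(Pr)=$ number of cars in diagonal $-l(Pr)$ given in Theorem \ref{thm:touchpref}. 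Modulo this bookkeeping, the Square Paths Conjecture falls out in a single short calculation.
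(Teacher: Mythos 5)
Your proposal is correct and follows exactly the paper's own route: decompose ${\cal P}ref_n$ by $\touch(Pr)=k$, apply Theorem \ref{thm:touchpref} to each piece, use linearity of $\nabla$, and conclude with Theorem \ref{Esymmid}. Your bookkeeping remarks about matching $k$ with the last run of $\diagword$ are precisely what the paper's definition of $\touch(Pr)$ in Theorem \ref{thm:touchpref} settles, so nothing is missing.
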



\nocite{*}
\bibliographystyle{plainnat} 
\bibliography{nablapn}

\end{document}